\definecolor{blue}{HTML}{1F77B4}
\definecolor{orange}{HTML}{FF7F0E}
\definecolor{green}{HTML}{2CA02C}
\pgfplotsset{compat=1.14}
\newtheorem{tm}{Theorem}[section]
\newtheorem{lem}[tm]{Lemma}
\newtheorem{rk}[tm]{Remark}
\numberwithin{equation}{section}
\numberwithin{tm}{section}
\title{Global dynamics of epidemic network models via construction of Lyapunov functions 
}
 \author{Rachidi B. Salako\footnote{rachidi.salako@unlv.edu; Department of Mathematical Sciences, University of Nevada Las Vegas, Las Vegas, USA} \quad and \quad Yixiang Wu\footnote{yixiang.wu@mtsu.edu; Department of Mathematical Sciences, Middle Tennessee State University,
  Murfreesboro, Tennessee 37132, USA} }
\date{}
\begin{document}

\maketitle

\begin{abstract} 
In this paper, we study the global dynamics of epidemic network models with standard incidence or mass-action transmission mechanism, when the dispersal of either the susceptible or the infected people is controlled. The connectivity matrix of the model is not assumed to be symmetric. Our main technique to study the global dynamics is to construct novel Lyapunov type functions.
% Our results may be helpful to developing and implementing effective disease control strategies. % where the dispersal pattern of the individuals in the network is not necessarily symmetric. 
\end{abstract}

%%AMS Classification: 34D23, 92B05, 15A18
%%
\section{Introduction}

In this paper, we study the global dynamics of the following Susceptible-Infected-Susceptible (SIS)   epidemic network (patch) model
 \begin{equation}\label{model-eq1}
 \left\{
 \begin{array}{lll}
\displaystyle S_i'&=d_S\displaystyle\sum_{j\in\Omega, j\neq i}(L_{ij} S_j-L_{ji} S_i)-f_i(S_i, I_i)+ \gamma_{i} I_{i},  & i\in\Omega,\ t>0, \cr 
\displaystyle I_i'&=d_I\displaystyle\sum_{j\in\Omega, j\neq i}(L_{ij} I_{j}-L_{ji} I_{i})+f_i(S_i, I_i)-\gamma_{i} I_{i}, & i\in\Omega,\ t>0,    
%\displaystyle N&=\displaystyle\sum_{j\in\Omega}(S_j+ I_{j}), &t\ge 0,
\end{array}
\right.
\end{equation}
where $\bm S=(S_1, \dots, S_n)$ and $\bm I=(I_1, \dots, I_n)$ denote the number of susceptible and infected people living in $n-$patches (e.g., cities, countries, etc.) respectively;  $n\ge 2$ is the number of patches; $\Omega=\{1,2,\cdots,n\}$ is the collection of patch labels; $\gamma_i$ is the disease recovery rate; and $f_i(S_i, I_i)$ describes the interaction of susceptible and infected people.
The coefficient $L_{ij}$, $j\neq i$, is the degree of movement of individuals from patch $j$ to $i$, and $d_S$ and $d_I$ are the dispersal rates of susceptible and infected people, respectively. For simplicity,  let $L_{ii}=-\sum_{j\in\Omega, j\neq i} L_{ji}$, which is the total degree of movement out from patch $i$. The matrix $\mathcal{L}=(L_{ij})$ is called the connectivity matrix, which will \textit{not} be assumed to be symmetric. We can rewrite model \eqref{model-eq1} as
 \begin{equation}\label{model-eq2}
 \left\{
 \begin{array}{lll}
\displaystyle S_i'&=d_S\displaystyle\sum_{j\in\Omega}L_{ij} S_j-f_i(S_i, I_i)+ \gamma_{i} I_{i},  & i\in\Omega,\ t>0, \cr 
\displaystyle I_i'&=d_I\displaystyle\sum_{j\in\Omega}L_{ij} I_{j}+f_i(S_i, I_i)-\gamma_{i} I_{i}, & i\in\Omega,\ t>0.    
%\displaystyle N&=\displaystyle\sum_{j\in\Omega}(S_j+ I_{j}), &t\ge 0,
\end{array}
\right.
\end{equation}
Adding up all the equations in \eqref{model-eq1}, we obtain $\frac{d}{dt}\sum_{i\in\Omega}(S_i+I_i)=0$. This means that the total population $N:=\sum_{i\in\Omega}(S_i+I_i)$ remains a constant for all $t\ge 0$.

Model \eqref{model-eq1} when $d_S>0$ and $d_I>0$ has been studied recently in order to understand the joint impact of environmental heterogeneity and population movement on disease transmission. In \cite{allen2007asymptotic}, Allen \textit{et al.} studied \eqref{model-eq1} with standard incidence transmission mechanism $f_i=\beta_i S_iI_i/(S_i+I_i)$ ($\beta_i$ is called the disease transmission rate) and symmetric connectivity matrix $\mathcal{L}$. In particular, the authors defined a basic  reproduction number $\mathcal{R}_0$ and showed that $\mathcal{R}_0$ is a threshold parameter: the solution converges to the disease free equilibrium if $\mathcal{R}_0<1$ and the model has a unique endemic equilibrium (EE) (i.e. positive equilibrium) if $\mathcal{R}_0>1$. Moreover, they showed that the disease component of the EE converges to zero if $d_S\to 0$ when $\beta_i-\gamma_i$ changes sign for $i\in\Omega$. Biologically, this result indicates that one may control the disease by limiting the movement of susceptible people. In contrast, Li and Peng \cite{li2019dynamics} show that one may not eliminate the disease by limiting the movement rate of infected people $d_I$. In \cite{chen2020asymptotic}, Chen \textit{et al.} show that the above results remain true if the connectivity matrix $\mathcal{L}=(L_{ij})$ is not symmetric.  %In \cite{li2023sis, Salako2023dynamics}, the authors have revisited
{Model \eqref{model-eq1} with the mass-action incidence transmission mechanism $f_i=\beta_i S_iI_i$ has also received some attention recently (see  \cite{li2023sis} for symmetric $\mathcal{L}$  and \cite{Salako2023dynamics} for asymmetric $\mathcal{L}$).} % when \mathcal{L} is symmetric or asymmetric.
These results show that whether the disease can be eliminated by limiting $d_S$ is depending on the total population size $N$ and the infected people may concentrate on the patches of the highest risk if $d_I\to 0$. Moreover, it has been shown in \cite{Salako2023dynamics} that the model {with  mass-action incidence  transmission mechanism} may have multiple EE solutions when $\mathcal{R}_0<1$. Therefore, the predictions on the consequences of disease control by limiting population movement are depending on the chosen transmission mechanism, the population size, and the control strategy.    We remark that Gao \textit{et al.} \cite{gao2020does,gao2021impact} studied the impact of population movement on the number of infected cases using model \eqref{model-eq1} {with the standard incidence transmission mechanism}, and 
the related reaction-diffusion models have attracted many research interests
(see, e.g., \cite{Allen,Li2018,WuZou,castellano2022effect}).
%(see, e.g., \cite{Allen,CuiLamLou,Li2018, li2020dynamics,LouSalako2021,LSS2023,peng2021global, PengZhao,  DengWu,  WuZou,tao2023analysis,castellano2022effect,peng2023novel}.

The global dynamics of epidemic network models have attracted tremendous research interests recently. In particular, a graph theoretical method based on Kirkland's matrix tree theorem to construct   Lyapunov functions for network models has been proposed in \cite{guo2006global, li2010global}. The idea of this method is to take a weighted combination of the Lyapunov functions for the decoupled  local models (i.e., the system without dispersal), and the weights are determined by a Laplacian matrix that is related to the network structure. 
This method has been applied to obtain the global stability of positive equilibrium for many biological network models (see, e.g., \cite{shuai2013global,zhang2015graph,shu2012global,guo2012global}). 

The main objective of the current paper is to investigate the global dynamics of model \eqref{model-eq1} with $d_S=0$ or $d_I=0$. As pointed out in the recent work \cite{Salako2023degenerate} on the reaction-diffusion counterpart of model \eqref{model-eq1}, the asymptotic profiles of the EE can reflect the consequences of disease control by limiting population movement if the solutions of \eqref{model-eq1} always establish at some EE and the disease dynamics happens in a faster time scale than the disease control. However, if the disease control happens in a faster time scale, then the global dynamics of \eqref{model-eq1} with $d_S=0$ or $d_I=0$ may better tell the impact of the disease control strategies. As in  \cite{Salako2023degenerate}, we will construct Lyapunov type functions to study the global dynamics of the solutions. The  Lyapunov functions constructed by us do not use the graph-theoretical method, and we will conquer the difficulties arising from  asymmetric $\mathcal{L}$ by proving a useful result, i.e., Theorem \ref{theorem_positive}. 

The rest of the paper is organized as follows. In section 2, we  present some preliminary results. In section 3, we  study the global dynamics of \eqref{model-eq1} with the mass-action incidence mechanism. In section 4, we  study the global dynamics of the model with the standard incidence mechanism. 

% For $j\in\Omega$,   $\bm S_j$ is the total size of susceptible population on patch-$j$; $\bm I_{j}$ is the total number of infected population  on patch-$j$;  $d_S$ and $d_I$ are nonnegative  numbers and stand respectively for the dispersal rates of susceptible population and infected population; $\bm \beta_{j}$  is positive and represents the disease transmission rate resulting from interaction of the susceptible and infected population on patch $j$; $\bm\gamma_{j}$ is positive and represents the recovery rate on patch-$j$.   Note that the $L_{j,j}$ doesn't appear in \eqref{model-eq1}, so  we set $L_{j,j}=0$ for convenience. For $i, j\in\Omega$, $L_{i,j}$ is a nonnegative number and represents migration rate from  the patch-$j$ to the patch-$i$. 

\section{Preliminaries}\label{Sec2}

\subsection{Notations}
We will use a bold letter to represent   a column vector in $\mathbb{R}^n$, and the corresponding no-bold form with a subscript $j$ will be the $j$th-component of the vector. 
Let $\bm 0:=(0, \dots, 0)^T$ and $\bm 1:=(1, \dots, 1)^T$. For $\bm X\in\mathbb{R}^n$, define 
 $$
{\bm X}_{m}:=\min_{j\in\Omega} X_j,\quad  {\bm X}_{M}:=\max_{j\in\Omega} X_j,\quad 
 \|\bm X\|_1:=\sum_{j=1}^n|X_{i}|,\quad \text{and}\quad \|\bm X\|_{\infty}:=\max_{j\in\Omega}| X_j|.
$$
%We denote by ${\rm diag}(\bm X)$ the diagonal matrix with diagonal entries $[{\rm diag}(\bm X)]_{ii}= X_i$ for all $i=1,\cdots,n$.
We adopt the following convention: $\bm X\ge  \bm 0$ if $X_i\ge  0$ for all $i\in\Omega$, $\bm X\gg  \bm 0$ if $X_i>  0$ for all $i\in\Omega$, and $\bm X>\bm 0$ if $\bm X\ge \bm 0$ and $\bm X\neq \bm 0$. 
 For  $\bm X, \bm Y\in \mathbb{R}^n$, the Hadamard product of them is $ \bm X\circ \bm Y :=(X_1Y_1,\cdots, X_n Y_n)^T$ 
 and we define  ${\bm X}/{\bm Y}=(X_1/Y_1,\cdots,$
 $ X_n/ Y_n)^T$ if $Y_i\neq 0$ for all $i\in\Omega$.

For $n\times n$ square matrix $A$, let $\sigma(A)$ be the spectrum of $A$, $s(A)$ be the spectral bound of $A$, i.e.,
\begin{equation*}
    s(A):=\max\{\mathfrak{R}e(\lambda)\ :\ \lambda\in\sigma(A)\},
\end{equation*}
where $\mathfrak{R}e(\lambda)$ is the real part of $\lambda\in \mathbb{C}$, 
and  $\rho(A)$ be the spectral radius of $A$, i.e.,
$$
\rho(A):=\max\{|\lambda|\ :\ \lambda\in\sigma(A)\}.
$$

 Throughout this work, we shall suppose that the following  assumptions hold.

\medskip

\noindent{\bf (A1)} The matrix $\mathcal{L}=(L_{ij})_{i,j=1}^n$ is quasi-positive (i.e., $L_{ij}\ge 0$ for any $j\neq i$)  and irreducible with $L_{ii}=-\sum_{j\in\Omega, j\neq i} L_{ji}$ for each $i\in\Omega$.

\noindent{\bf (A2)} The initial data $(\bm S^0, \bm I^0)$ satisfies $\bm S^0\ge\bm 0$, $\bm I^0>\bm 0$  and   $\sum_{j\in\Omega}( S_{j}^0+ I^0_{j})=N$, where $N$ is a fixed positive constant. 

\noindent{\bf (A3)} $d_S\ge 0, d_I\ge 0$, $\bm \beta\gg \bm 0$ and $\bm \gamma\gg \bm 0$. 

\medskip

By {\bf (A1)} and the Perron-Frobenius theorem, we have 
\begin{lem}\label{lemma_eig}
    Suppose that {\bf (A1)} holds. Then $s(\mathcal{L})=0$ is a simple eigenvalue of $\mathcal{L}$, and there is an   eigenvector $\bm\alpha$ associated with $s(\mathcal{L})$ satisfying 
\begin{equation}\label{alpha-eq}
\mathcal{L}\bm\alpha=\bm 0,\quad \sum_{j\in\Omega}{\alpha}_j=1,\quad \text{and} \quad {\alpha}_j>0,\ \forall \ j\in\Omega,
\end{equation}  
and $\bm\alpha$ is the unique nonnegative eigenvalue of $\mathcal{L}$ satisfying $\sum_{j\in\Omega}{\alpha}_j=1$. 
\end{lem}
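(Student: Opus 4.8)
The plan is to reduce everything to the classical Perron--Frobenius theorem by shifting $\mathcal{L}$ into a nonnegative matrix. Since $\mathcal{L}$ is quasi-positive, I would pick a constant $c>0$ large enough that $B:=\mathcal{L}+cI$ has only nonnegative entries; because $\mathcal{L}$ is irreducible by {\bf (A1)}, so is $B$. The Perron--Frobenius theorem then yields that $\rho(B)$ is a simple eigenvalue of $B$ admitting an eigenvector $\bm v\gg\bm 0$, and that $\rho(B)$ is the \emph{only} eigenvalue of $B$ possessing a nonnegative eigenvector, any such eigenvector being a positive multiple of $\bm v$. Since $\sigma(B)=\{\lambda+c:\lambda\in\sigma(\mathcal{L})\}$ and $B\ge 0$ forces $\rho(B)=\max\{\mathfrak{R}e(\mu):\mu\in\sigma(B)\}$, we get $s(\mathcal{L})=\rho(B)-c$; in particular $s(\mathcal{L})$ is a simple eigenvalue of $\mathcal{L}$ with associated eigenvector $\bm v$.

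Next I would identify $s(\mathcal{L})$ with $0$. The defining relation $L_{ii}=-\sum_{j\neq i}L_{ji}$ says precisely that every column of $\mathcal{L}$ sums to zero, i.e. $\bm 1^{T}\mathcal{L}=\bm 0^{T}$; hence $0\in\sigma(\mathcal{L}^{T})=\sigma(\mathcal{L})$ and $s(\mathcal{L})\ge 0$. For the reverse inequality I would apply Gershgorin's theorem to $\mathcal{L}^{T}$: its $i$-th row has diagonal entry $L_{ii}\le 0$ and off-diagonal absolute-value sum $\sum_{j\neq i}L_{ji}=-L_{ii}$, so each Gershgorin disc is contained in $\{z\in\mathbb{C}:\mathfrak{R}e(z)\le 0\}$. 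Thus every eigenvalue of $\mathcal{L}^{T}$, hence of $\mathcal{L}$, has nonpositive real part, giving $s(\mathcal{L})\le 0$. Combining, $s(\mathcal{L})=0$, so $\rho(B)=c$ and $B\bm v=c\bm v$, i.e. $\mathcal{L}\bm v=\bm 0$. Setting $\bm\alpha:=\bm v/\sum_{j\in\Omega}v_{j}$ gives $\mathcal{L}\bm\alpha=\bm 0$, $\sum_{j\in\Omega}\alpha_j=1$, and $\alpha_j>0$ for all $j$, which is \eqref{alpha-eq}.

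For the uniqueness claim, suppose $\bm\beta\ge\bm 0$, $\bm\beta\neq\bm 0$, is any eigenvector of $\mathcal{L}$ with $\sum_{j\in\Omega}\beta_j=1$, say $\mathcal{L}\bm\beta=\mu\bm\beta$. Then $B\bm\beta=(\mu+c)\bm\beta$ exhibits $\bm\beta$ as a nonnegative eigenvector of the irreducible nonnegative matrix $B$, so by the Perron--Frobenius statement recalled above $\mu+c=\rho(B)=c$ (hence $\mu=0$) and $\bm\beta=t\bm v$ for some $t>0$; the normalization $\sum_{j\in\Omega}\beta_j=1$ then forces $t=1/\sum_{j\in\Omega}v_j$, i.e. $\bm\beta=\bm\alpha$.

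The argument is essentially a transcription of Perron--Frobenius for the shifted matrix $B$; the only step requiring genuine care is the determination $s(\mathcal{L})=0$, where the column-sum identity provides the bound $s(\mathcal{L})\ge 0$ and a Gershgorin (diagonal-dominance) estimate on $\mathcal{L}^{T}$ provides $s(\mathcal{L})\le 0$. One could alternatively invoke a Perron--Frobenius theorem stated directly for irreducible quasi-positive (Metzler) matrices and skip the shift, but the shift keeps the prerequisites minimal.
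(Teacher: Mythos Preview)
Your proof is correct and follows the same approach as the paper, which simply states the lemma as a consequence of the Perron--Frobenius theorem without further detail. Your shift-and-apply-Perron--Frobenius argument, together with the Gershgorin bound and the column-sum identity to pin down $s(\mathcal{L})=0$, is exactly the standard way to unpack that citation.
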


Throughout the paper, $\bm\alpha$ is fixed and given as in  Lemma \ref{lemma_eig}. By {\bf (A1)}-{\bf (A3)}, for each initial data $(\bm S^0, \bm I^0)$, model \eqref{model-eq1} has a unique nonnegative global solution, which  leaves the following compact set invariant 
$$
\mathcal{E}:=\Big\{(\bm S,\bm I)\in [\mathbb{R}^n_+]^2\ : \ \sum_{j\in\Omega}( S_j+ I_{j})=N\Big\}.
$$  
Moreover, if $d_I=0$, then $I_{j_0}(0)=0$ for some $j_0\in\Omega$ implies that $ I_{j_0}(t)=0$ for all $t>0$. However if $d_I>0$, by $\bm I^0>\bm 0$, $ I_{j}(t)>0$ for all $t>0$ and $j\in\Omega$. 
%We say that a solution  $(\bm S(t),\bm I(t))$ has positive initial data if $ I_{0j}>0$ for some  $j\in\Omega$. 

For convenience, define 
\begin{equation*}\label{local-risk}
    { r}_{j}
    :=\frac{ \gamma_{j}}{\beta_{j}}, \quad j\in\Omega,
\end{equation*}
which describes the local risk of patch $j$ when the network is decoupled.

\subsection{Some useful results}
%  The $n\times n$ real matrix $A=(a_{ij})$ is called \textit{line-sum symmetric} if $\sum_{j}a_{ij}=\sum_{j}a_{ji}$, i.e. the sum of the $i$-th row equals the sum of the $i$-th column, for $i=1,\dots, n$. 

% \begin{tm}\label{theorem_positive}
%  Suppose that  {\bf (A1)} holds. Let $\bm\alpha=(\alpha_1, \dots, \alpha_n)$  be a positive eigenvector of $\mathcal{L}$ corresponding with eigenvalue zero and $\bm \theta=\bm 1/\bm \alpha$. Then $\sum_{i, j}\theta_iL_{ij} X_i X_j\le 0$ for any $\bm X\in\mathbb{R}^n$, where the equality holds if and only if $\bm X$ is a multiple of $\bm \alpha$. 
% \end{tm}

% We will also need the following result.

% \begin{lem}\label{lem4} Let $F:[0,\infty)\to [0,\infty)$ be a uniformly continuous function satisfying 
% $$ 
% \sup_{t\ge a}F(t)e^{\int_a^tF(s)ds}<\infty 
% $$
% for some nonnegative number $a\ge 0$. Then $F(t)\to 0$ as $t\to\infty$.
    
% \end{lem}

We present an important result, Theorem \ref{theorem_positive}, which will help us to construct delicate Lyapunov functions later on.  An $n\times n$ real matrix $A=(a_{ij})$ is called \textit{line-sum symmetric} if $\sum_{j}a_{ij}=\sum_{j}a_{ji}$, i.e. the sum of the $i$-th row equals the sum of the $i$-th column for all $i=1,\dots, n$. 

\begin{lem}
\label{TC} Suppose that  {\bf (A1)} holds and $\mathcal{L}$ is line-sum symmetric.  Then $\sum_{i,j\in\Omega}L_{ij} X_i X_j\le 0$ for any $ \bm X\in \mathbb{R}^n$, where the equality holds if and only if $\bm X$ is a multiple of $\bm 1$.
\end{lem}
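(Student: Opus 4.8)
The plan is to rewrite the quadratic form as a manifestly nonpositive sum of squares, using \emph{both} the column-sum and the row-sum structure forced by the hypotheses.

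First I would extract the structural consequences of the assumptions. By the normalization $L_{ii}=-\sum_{j\neq i}L_{ji}$ built into \textbf{(A1)}, each column of $\mathcal{L}$ sums to zero, i.e.\ $\sum_{i\in\Omega}L_{ij}=0$ for every $j$. Line-sum symmetry, $\sum_{j}L_{ij}=\sum_{j}L_{ji}$, then forces every row sum to vanish as well; equivalently $\mathcal{L}\bm 1=\bm 0$ and $\bm 1^{T}\mathcal{L}=\bm 0^{T}$ hold simultaneously. This double vanishing is the ingredient that makes the symmetrization below possible.

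Next I would expand $\sum_{i,j\in\Omega}L_{ij}X_iX_j=\sum_{i}L_{ii}X_i^{2}+\sum_{i\neq j}L_{ij}X_iX_j$ and eliminate the diagonal entries in two different ways. Substituting $L_{ii}=-\sum_{j\neq i}L_{ji}$ and relabeling the summation indices yields $\sum_{i,j}L_{ij}X_iX_j=\sum_{i\neq j}L_{ij}X_j(X_i-X_j)$, whereas substituting $L_{ii}=-\sum_{j\neq i}L_{ij}$ (legitimate precisely because of line-sum symmetry) yields $\sum_{i,j}L_{ij}X_iX_j=\sum_{i\neq j}L_{ij}X_i(X_j-X_i)$. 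Averaging the two identities makes the cross terms combine into a perfect square:
\[
\sum_{i,j\in\Omega}L_{ij}X_iX_j=-\frac12\sum_{i\neq j}L_{ij}(X_i-X_j)^{2}.
\]
Since $L_{ij}\ge 0$ for $i\neq j$ by \textbf{(A1)}, the right-hand side is $\le 0$, which is the stated inequality.

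Finally, for the equality statement: the displayed identity shows that $\sum_{i,j}L_{ij}X_iX_j=0$ if and only if $L_{ij}(X_i-X_j)^{2}=0$ for every pair $i\neq j$, that is, $X_i=X_j$ whenever $L_{ij}>0$. Because $\mathcal{L}$ is irreducible, its associated directed graph is strongly connected, so any two indices are linked by a chain of edges along which consecutive components of $\bm X$ are equal; hence $\bm X$ is constant, i.e.\ a scalar multiple of $\bm 1$. The converse is trivial, since $X_i-X_j\equiv 0$ when $\bm X\in\mathbb{R}\bm 1$. I do not anticipate a genuine obstacle here; the only point requiring care is the index bookkeeping that produces the two expressions for the quadratic form, and recognizing that line-sum symmetry is exactly what legitimizes the second substitution.
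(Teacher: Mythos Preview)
Your proof is correct and follows essentially the same approach as the paper: both arguments derive the key identity $\sum_{i,j}L_{ij}X_iX_j=-\tfrac12\sum_{i\neq j}L_{ij}(X_i-X_j)^2$ from the vanishing of row and column sums, and then invoke irreducibility for the equality case. The only cosmetic difference is that the paper expands $X_iX_j=-\tfrac12(X_i-X_j)^2+\tfrac12X_i^2+\tfrac12X_j^2$ directly, while you obtain two intermediate expressions and average them; the content is the same.
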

\begin{proof}
    Since $\mathcal{L}$ is line-sum symmetric, $-L_{ii}=\sum_{j\neq i, j\in\Omega} L_{ij}=\sum_{j\neq i, j\in\Omega} L_{ji}$ for any $i=1, \dots, n$. For any $\bm X\in\mathbb{R}^n$, we have 
    \begin{align*}
 \sum_{i, j\in\Omega}L_{ij} X_i X_j&=   \sum_{i, j\in\Omega, j\neq i}L_{ij} X_i X_j+\sum_{i\in\Omega} L_{ii} X_i^2\\
 &=- \frac{1}{2}\sum_{i, j\in\Omega, j\neq i}L_{ij}( X_i- X_j)^2+\frac{1}{2}\sum_{i, j\in\Omega, j\neq i}L_{ij} X_i^2+\frac{1}{2}\sum_{i, j\in\Omega, j\neq i}L_{ij} X_{j}^2+\sum_{i\in\Omega} L_{ii} X_i^2\\
 &=-\frac{1}{2}\sum_{i, j\in\Omega, j\neq i}L_{ij}( X_i- X_j)^2+\frac{1}{2}\sum_{i, j\in\Omega, j\neq i}L_{ij} X_i^2+\frac{1}{2}\sum_{i, j\in\Omega, j\neq i}L_{ji} X_{i}^2+\sum_{i\in\Omega} L_{ii} X_i^2\\
  &=-\frac{1}{2}\sum_{i, j\in\Omega, j\neq i}L_{ij}( X_i- X_j)^2-\frac{1}{2}\sum_{i\in\Omega}L_{ii} X_i^2-\frac{1}{2}\sum_{i\in\Omega}L_{ii} X_{i}^2+\sum_{i\in\Omega} L_{ii} X_i^2\\
   &=-\frac{1}{2} \sum_{i, j\in\Omega, j\neq i} L_{ij}( X_i- X_j)^2\le 0.
    \end{align*}
 Since $L$ is irreducible, the equality holds if and only if $ X_1=\dots= X_n$, i.e., $\bm X$ is a multiple of $\bm 1$.    
\end{proof}

\begin{rk}
    By Lemma \ref{TC}, if $\mathcal{L}$ satisfies  {\bf (A1)} and is line-sum symmetric, then $(\mathcal{L}+\mathcal{L}^T)/2$ is negative semidefinite. 
\end{rk}

\begin{tm}\label{theorem_positive}
 Suppose that  {\bf (A1)} holds. Let $\bm\alpha$  be defined as above and $\bm \theta:=1/\bm\alpha$. Then $\sum_{i, j}\theta_iL_{ij} X_i X_j\le 0$ for any $\bm X\in\mathbb{R}^n$, where the equality holds if and only if $\bm X$ is a multiple of $\bm \alpha$. 
\end{tm}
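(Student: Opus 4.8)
The plan is to reduce the assertion to Lemma \ref{TC} via a diagonal change of variables. Let $D:=\mathrm{diag}(\alpha_1,\dots,\alpha_n)$ and set $\tilde{\mathcal{L}}:=\mathcal{L}D$, i.e.\ $\tilde L_{ij}:=L_{ij}\alpha_j$. For $\bm X\in\mathbb{R}^n$ put $Y_i:=X_i/\alpha_i$, so that $\bm X=D\bm Y$; since $\theta_i\alpha_i=1$, a one-line computation gives $\sum_{i,j}\theta_iL_{ij}X_iX_j=\sum_{i,j}\theta_iL_{ij}(\alpha_iY_i)(\alpha_jY_j)=\sum_{i,j}L_{ij}\alpha_jY_iY_j=\sum_{i,j}\tilde L_{ij}Y_iY_j$. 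Thus it is enough to verify that $\tilde{\mathcal{L}}$ satisfies the hypotheses of Lemma \ref{TC} and then to pull the equality case back through $\bm X=D\bm Y$.

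The crux is therefore to check that $\tilde{\mathcal{L}}$ fulfills {\bf (A1)} and is line-sum symmetric. Quasi-positivity and irreducibility are immediate because $\alpha_j>0$ for all $j$: this forces $\tilde L_{ij}=L_{ij}\alpha_j\ge 0$ for $i\neq j$, and $\tilde L_{ij}=0\iff L_{ij}=0$, so $\tilde{\mathcal{L}}$ has the same off-diagonal zero pattern (hence the same associated digraph) as $\mathcal{L}$. For the row sums, $\sum_j\tilde L_{ij}=\sum_jL_{ij}\alpha_j=(\mathcal{L}\bm\alpha)_i=0$ by Lemma \ref{lemma_eig}. For the column sums, observe first that the defining relation $L_{ii}=-\sum_{j\neq i}L_{ji}$ in {\bf (A1)} says precisely that every column of $\mathcal{L}$ sums to zero; hence $\sum_i\tilde L_{ij}=\alpha_j\sum_iL_{ij}=0$. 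In particular the row sums and column sums of $\tilde{\mathcal{L}}$ all vanish, so $\tilde{\mathcal{L}}$ is line-sum symmetric; and the same zero-column-sum relation gives $\tilde L_{ii}=L_{ii}\alpha_i=-\alpha_i\sum_{j\neq i}L_{ji}=-\sum_{j\neq i}\tilde L_{ji}$, so $\tilde{\mathcal{L}}$ indeed satisfies {\bf (A1)}.

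With these checks done, Lemma \ref{TC} applied to $\tilde{\mathcal{L}}$ yields $\sum_{i,j}\tilde L_{ij}Y_iY_j\le 0$ for every $\bm Y\in\mathbb{R}^n$, with equality precisely when $\bm Y$ is a multiple of $\bm 1$. Translating back, $\sum_{i,j}\theta_iL_{ij}X_iX_j\le 0$, and equality holds iff $X_i/\alpha_i$ is independent of $i$, that is, iff $\bm X$ is a multiple of $\bm\alpha$. This is exactly the claim.

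I do not anticipate a genuine obstacle. The only points requiring care are (i) recognizing that {\bf (A1)} already forces $\mathcal{L}$ to have zero column sums — this is what makes $\tilde{\mathcal{L}}$ line-sum symmetric rather than merely having zero row sums — and (ii) being consistent about conjugating by $D$ on the right (so that the row sums of $\tilde{\mathcal{L}}=\mathcal{L}D$ are $\mathcal{L}\bm\alpha=\bm 0$).
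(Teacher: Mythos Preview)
Your proof is correct and follows essentially the same route as the paper: define $\tilde{\mathcal{L}}=\mathcal{L}D$ with $D=\mathrm{diag}(\bm\alpha)$, observe that it has zero row sums (from $\mathcal{L}\bm\alpha=\bm 0$) and zero column sums (from the column-sum condition in {\bf (A1)}), apply Lemma~\ref{TC}, and undo the substitution $\bm X=D\bm Y$. You are slightly more explicit than the paper in verifying that $\tilde{\mathcal{L}}$ inherits quasi-positivity, irreducibility, and the diagonal condition in {\bf (A1)}, but the argument is the same.
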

\begin{proof}
   Let $D=\text{diag}(\bm\alpha)$. Then, we have $\mathcal{L}D=(L_{ij}\alpha_j)_{i,j=1}^n$. The sum of the $j$-th column of $\mathcal{L}D$ is $\sum_{i}L_{ij}\alpha_j=\alpha_j\sum_{i}L_{ij}=0$ for any $1\le j\le n$, since the sum of each column of $\mathcal{L}$ is zero.  The sum of the $i$-th row of $\mathcal{L}D$ is $\sum_{j}L_{ij}\alpha_j=0$ for any $1\le i\le n$, since $\alpha$ is an eigenvector of $\mathcal{L}$ corresponding with eigenvalue zero. Therefore, $\mathcal{L}D$ is line-sum symmetric, where the sum of each column and row is zero. By Lemma \ref{TC}, we have $\sum_{i,j}L_{ij}\alpha_j  Y_i Y_j\le 0$ for any $\bm Y\in \mathbb{R}^n$, where the equality holds if and only if $\bm Y$ is a multiple of $\bm 1$.

   We notice that $\sum_{i,j}L_{ij}\alpha_j Y_i Y_j=\sum_{i,j}\theta_iL_{ij}(\alpha_i Y_i)(\alpha_j Y_j)$. Since $\bm\alpha\gg\bm 0$ and $\bm Y\in\mathbb{R}^n$ were arbitrary, we have  $\sum_{i, j}\theta_iL_{ij} X_i X_j\le 0$ for any $\bm X\in\mathbb{R}^n$, where the equality holds if and only if $\bm X$ is a multiple of $\bm \alpha$. 
\end{proof}

\begin{rk}
     Suppose that $L$ satisfies  {\bf (A1)}. Let $\bm\alpha$ be a positive eigenvector of $\mathcal{L}$ and $\bm\theta=1/\bm\alpha$.  Let $D_1=\text{diag}(\bm\alpha)$ and $D_2=\text{diag}(\bm\theta)$.  By Lemma \ref{TC} and Theorem \ref{theorem_positive}, the two matrices $(\mathcal{L}D_1+ (\mathcal{L}D_1)^T)/2$ and $(D_2\mathcal{L}+ (D_2\mathcal{L})^T)/2$ are negative semidefinite. 
\end{rk}

 Next, we collect some useful results. 
\begin{lem}[\cite{Salako2023dynamics}]\label{lem0}
    Suppose that  {\bf (A1)} holds. Let $d$ be a positive number and $\bm F :\mathbb{R}_+\to \mathbb{R}^n$ be a continuous map satisfying $\|\bm F(t)\|_1\to 0$ as $t\to\infty$. Let  $\bm X(t)$ be a bounded solution of the system
    \begin{equation*}
       \begin{cases} \bm X'(t)=d\mathcal{L}\bm X(t) +\bm F(t),\ t>0,\cr 
        \bm X(0)=\bm X^0\in\mathbb{R}^n.
        \end{cases}
    \end{equation*}
    Then $  
    \lim_{t\to\infty}\|\bm X(t)-(\sum_{j\in\Omega} X_j(t))\bm\alpha\|_1=0.$  In particular, if $\bm F(t)={\bf 0} $ for all $t\ge 0$,  then $\bm X(t)\to (\sum_{j\in\Omega}X^0_j)\bm\alpha$ as $t\to\infty$.
\end{lem}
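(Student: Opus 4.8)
The plan is to reduce the statement to a single exponential-decay estimate on the hyperplane $V:=\{\bm Z\in\mathbb{R}^n:\sum_{j\in\Omega}Z_j=0\}$ and then run a routine convolution argument. Set $\bm Y(t):=\bm X(t)-\big(\sum_{j\in\Omega}X_j(t)\big)\bm\alpha$; the goal is exactly $\|\bm Y(t)\|_1\to 0$. First I would record two elementary consequences of {\bf (A1)} and Lemma \ref{lemma_eig}: the columns of $\mathcal{L}$ sum to zero (by $L_{ii}=-\sum_{j\neq i}L_{ji}$), so $\bm 1^T\mathcal{L}=\bm 0^T$; and $\mathcal{L}\bm\alpha=\bm 0$. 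The first gives $\frac{d}{dt}\sum_{j\in\Omega}X_j(t)=\bm 1^T\bm X'(t)=\sum_{j\in\Omega}F_j(t)$, and then both together yield
\begin{equation*}
\bm Y'(t)=d\mathcal{L}\bm Y(t)+\bm G(t),\qquad \bm G(t):=\bm F(t)-\Big(\sum_{j\in\Omega}F_j(t)\Big)\bm\alpha .
\end{equation*}
Since $\|\bm\alpha\|_1=\sum_{j\in\Omega}\alpha_j=1$ we get $\|\bm G(t)\|_1\le 2\|\bm F(t)\|_1\to 0$, and one checks $\sum_{j\in\Omega}Y_j(t)=0$ and $\sum_{j\in\Omega}G_j(t)=0$, i.e.\ $\bm Y(t),\bm G(t)\in V$ for all $t$. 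Hence it suffices to analyze the reduced system inside $V$.

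Next I would exploit the spectral structure of $\mathcal{L}$. By Lemma \ref{lemma_eig}, $0$ is a simple eigenvalue with $s(\mathcal{L})=0$, so all other eigenvalues have strictly negative real part; moreover $V$ is $\mathcal{L}$-invariant (because $\bm 1^T\mathcal{L}\bm Z=0$) and coincides with the spectral complement of $\mathrm{span}\,\bm\alpha$ (the spectral projection for the eigenvalue $0$ is $\bm X\mapsto(\bm 1^T\bm X)\bm\alpha$ since $\bm 1$ and $\bm\alpha$ are the left and right null vectors with $\bm 1^T\bm\alpha=1$), so $\sigma(\mathcal{L}|_V)=\sigma(\mathcal{L})\setminus\{0\}\subset\{\Re\lambda<0\}$. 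Therefore there are constants $M\ge 1$, $\omega>0$ with $\|e^{td\mathcal{L}}\bm Z\|_1\le Me^{-\omega t}\|\bm Z\|_1$ for all $\bm Z\in V$, $t\ge 0$. Using the variation-of-constants formula $\bm Y(t)=e^{td\mathcal{L}}\bm Y(0)+\int_0^t e^{(t-s)d\mathcal{L}}\bm G(s)\,ds$ (all vectors lying in $V$), the first term is $\le Me^{-\omega t}\|\bm Y(0)\|_1\to 0$; for the integral, given $\varepsilon>0$ pick $T$ with $\|\bm G(s)\|_1\le\varepsilon$ for $s\ge T$ and split $\int_0^t=\int_0^T+\int_T^t$, bounding the first piece by $Me^{-\omega(t-T)}\int_0^T\|\bm G(s)\|_1\,ds\to 0$ and the second by $M\varepsilon\int_T^t e^{-\omega(t-s)}\,ds\le M\varepsilon/\omega$. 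Letting $t\to\infty$ then $\varepsilon\to 0$ gives $\|\bm Y(t)\|_1\to 0$, which is the main claim; the ``in particular'' part is immediate, since $\bm F\equiv\bm 0$ forces $\sum_{j\in\Omega}X_j(t)\equiv\sum_{j\in\Omega}X_j^0$.

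The only non-routine ingredient is the decay bound $\|e^{td\mathcal{L}}\bm Z\|_1\le Me^{-\omega t}\|\bm Z\|_1$ on $V$, which is Lyapunov's stability theorem applied to $d\mathcal{L}|_V$ (spectrum in the open left half-plane), e.g.\ via the Jordan form of $\mathcal{L}|_V$; I also note that the boundedness of $\bm X(t)$ is not actually needed here, as the estimate above forces $\bm Y(t)\to\bm 0$ unconditionally, but it is harmless. An alternative route, closer to the spirit of this paper, avoids the semigroup estimate: take the Lyapunov function $\mathcal{V}(\bm Y):=\tfrac12\sum_{i\in\Omega}\theta_i Y_i^2$ with $\bm\theta=1/\bm\alpha$ as in Theorem \ref{theorem_positive}, which gives $\sum_{i,j}\theta_iL_{ij}Y_iY_j\le 0$ with equality only when $\bm Y$ is a multiple of $\bm\alpha$; since $V\cap\mathrm{span}\,\bm\alpha=\{\bm 0\}$, a compactness argument on the unit sphere of $V$ upgrades this to $\sum_{i,j}\theta_iL_{ij}Y_iY_j\le -c\,\mathcal{V}(\bm Y)$ on $V$ for some $c>0$, whence $\dot{\mathcal V}\le -cd\,\mathcal V+C\|\bm G(t)\|_1\sqrt{\mathcal V}$, and the same split-integral estimate applied to $\sqrt{\mathcal V+\epsilon}$ (to sidestep non-smoothness of $\sqrt{\cdot}$ at $0$) yields $\mathcal V(\bm Y(t))\to 0$.
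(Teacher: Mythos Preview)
The paper does not actually prove this lemma: it is quoted verbatim from \cite{Salako2023dynamics} with a citation and no argument, so there is no in-paper proof to compare against. Your proposal is a correct and self-contained proof. The decomposition $\bm Y(t)=\bm X(t)-(\sum_j X_j(t))\bm\alpha$, the verification that $\bm Y,\bm G\in V=\{\bm Z:\bm 1^T\bm Z=0\}$, and the identification of $V$ with the range of the complementary spectral projection $I-\bm\alpha\,\bm 1^T$ are all accurate; the key input $s(\mathcal{L}|_V)<0$ follows exactly as you say from simplicity of the zero eigenvalue in Lemma~\ref{lemma_eig}, and the exponential bound plus the split-integral estimate on the Duhamel term is standard and correctly executed. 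Your remark that boundedness of $\bm X$ is not used is also correct: continuity of $\bm F$ gives local boundedness of $\bm G$, which is all the split $\int_0^T+\int_T^t$ needs. The alternative Lyapunov route via $\mathcal V(\bm Y)=\tfrac12\sum_i\theta_iY_i^2$ and Theorem~\ref{theorem_positive} is a nice touch and ties the lemma to the paper's own machinery, though note that Theorem~\ref{theorem_positive} appears later in the text, so if you were to insert this as an actual proof here you would need to reorder or simply stick with the spectral/semigroup argument.
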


\medskip

The following is a Harnack's type inequality.

\begin{lem}\cite[Lemma 3.1]{DBS2023}\label{Harnck-lemma} Suppose that  {\bf (A1)} holds. Let $d>0$ and $ \bm M\in C(\mathbb{R}_+:[\mathbb{R}]^n)$ such that 
\begin{equation*}
    m_{\infty}:=\sup_{t\ge 0}\|\bm M(t)\|_{\infty}<\infty.
\end{equation*}
Then there is a positive number $c_{d,m_{\infty}}$ such that any nonnegative solution $\bm U(t)$ of 
\begin{equation}\label{Harnack-eq1}
    \bm U'=d\mathcal{L}\bm U +\bm M(t)\circ \bm U, \quad t>0,
\end{equation}
satisfies
\begin{equation}\label{Harnack-eq2}
    \|\bm U(t)\|_{\infty}\le c_{d,m_{\infty}}\bm U_{m}(t),\quad \forall\ t\ge 1.
\end{equation}   
\end{lem}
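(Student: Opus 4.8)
The plan is to prove the Harnack-type inequality \eqref{Harnack-eq2} by combining two ingredients: a uniform positive lower bound on the growth rate of the minimal component, and a uniform upper bound on the growth rate of the maximal component, over bounded time intervals. Since the coefficient matrix $\bm M(t)$ is uniformly bounded in $\|\cdot\|_\infty$ by $m_\infty$, the linear system \eqref{Harnack-eq1} is a cooperative (quasi-monotone) system: indeed, by \textbf{(A1)} the off-diagonal entries of $d\mathcal{L}$ are nonnegative, and the Hadamard term $\bm M(t)\circ\bm U$ only contributes to the diagonal. Hence nonnegative initial data stay nonnegative, and, because $\mathcal{L}$ is irreducible, any solution with $\bm U(0)>\bm 0$ becomes strictly positive for $t>0$; the point is to make this quantitative and $t$-independent.

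First I would fix $t\ge 1$ and look at the window $[t-1,t]$ (replacing it by $[0,t]$ when $t<1$ is not needed since we only claim $t\ge 1$). On this window write the equation componentwise as $U_i' = d L_{ii} U_i + d\sum_{j\ne i} L_{ij}U_j + M_i(t)U_i \ge (dL_{ii} - m_\infty)U_i$, which yields, by Gronwall, $U_i(s)\ge U_i(t-1)e^{-(|dL_{ii}|+m_\infty)(s-(t-1))}$ for $s\in[t-1,t]$; in particular every component that is positive at time $t-1$ stays bounded below by a fixed multiple of its earlier value. Conversely, summing all equations and using $\sum_i L_{ij}=0$ gives $\frac{d}{dt}\sum_i U_i = \sum_i M_i(t)U_i \le m_\infty\sum_i U_i$, so $\sum_i U_i(s)\le e^{m_\infty}\sum_i U_i(t-1)$ on the window, giving a uniform upper control: $\|\bm U(s)\|_\infty\le e^{m_\infty}\sum_i U_i(t-1)$.

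The heart of the argument is to propagate positivity through the connectivity graph. Using irreducibility of $\mathcal{L}$, there is an integer $k\le n-1$ and a fixed $\delta>0$ (depending only on $\mathcal{L}$) so that after $k$ ``steps'' the influence of any single patch reaches all patches. Concretely, from $U_i' \ge dL_{ij}U_j - (|dL_{ii}|+m_\infty)U_i$ for a neighbor $j$ of $i$, a Duhamel/variation-of-parameters estimate on a subinterval of length $1/n$ shows $U_i$ at the end of that subinterval is at least a fixed positive multiple of $\sup$ of $U_j$ over the subinterval (the constant involving $d$, $m_\infty$, $\min_{L_{ij}>0}L_{ij}$, and the window length). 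Chaining these at most $n-1$ bounds along a directed path from the argmax patch to any target patch over the $n$ equal subintervals of $[t-1,t]$, and using that $\sum_i U_i(t-1)$ already dominates the maximum somewhere on $[t-1,t-1+1/n]$ via the monotone-in-reverse lower bound from the previous paragraph, I would conclude $\bm U_m(t)\ge c^{-1}\sum_i U_i(t-1) \ge c^{-1}e^{-m_\infty}\|\bm U(t)\|_\infty$ for a constant $c=c_{d,m_\infty}$ depending only on $d$, $m_\infty$ (through bounds on $|L_{ii}|$ and the nonzero $L_{ij}$, which are fixed with $\mathcal{L}$), and $n$. Rearranging gives \eqref{Harnack-eq2}.

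The main obstacle is making the ``propagation of positivity along the graph'' step genuinely uniform in $t$: one must show that regardless of how $\bm M$ varies in time (subject only to the $m_\infty$ bound) and regardless of which component happens to be largest at time $t$, the mass spreads to all patches within a single unit-length window with a loss factor bounded away from zero by a constant independent of $t$. This is exactly where cooperativity plus irreducibility is used, and where one has to be careful to choose the path in the digraph of $\mathcal{L}$ and the subinterval lengths in a way that does not depend on $\bm U$ or $t$ — only on the fixed data $\mathcal{L}$, $d$, $m_\infty$, $n$. Since this lemma is quoted from \cite{DBS2023}, I would at most sketch this chaining and refer there for the bookkeeping of the constant.
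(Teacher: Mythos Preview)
The paper does not prove this lemma at all; it is stated with the citation \cite[Lemma 3.1]{DBS2023} and no proof environment follows, so there is nothing to compare your argument against. Your decision at the end to ``sketch this chaining and refer there for the bookkeeping'' is exactly what the paper does (minus the sketch).

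That said, your outline is essentially the standard proof of such a discrete parabolic Harnack inequality for cooperative irreducible systems, and it is sound. One small slip: in the Duhamel step you claim $U_i$ at the end of a subinterval dominates a fixed multiple of $\sup U_j$ over that subinterval, but the variation-of-parameters bound from $U_i'+aU_i\ge bU_j$ only yields a multiple of $\inf_{[s_0,s_1]}U_j$. This is harmless, since you have already recorded the pointwise decay lower bound $U_j(s)\ge U_j(t-1)e^{-(|dL_{jj}|+m_\infty)(s-(t-1))}$, which controls that infimum in terms of $U_j(t-1)$; the chaining along a directed path in the digraph of $\mathcal{L}$ then proceeds exactly as you describe, with constants depending only on $d$, $m_\infty$, $n$, and the fixed entries of $\mathcal{L}$.
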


    The following result is well-known.
    %, which will be used after the construction of Lyapunov functions:
\begin{lem}\label{lemma_inf0}
    Suppose that $\phi:\mathbb{R}_+\to\mathbb{R}_+$ is  H\"older continuous and $\int_0^\infty\phi(t)dt<\infty$. Then $\lim_{t\to\infty}\phi(t)=0$.
\end{lem}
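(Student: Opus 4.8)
The plan is a proof by contradiction using the Hölder modulus of continuity to convert a single large value of $\phi$ into a whole interval on which $\phi$ is bounded below by a fixed positive constant, and then to sum up the resulting contributions to the integral. Concretely, suppose $\phi$ does not tend to $0$. Then there exist $\varepsilon>0$ and a sequence $t_n\to\infty$ with $\phi(t_n)\ge \varepsilon$ for all $n$; passing to a subsequence we may assume $t_{n+1}>t_n+1$ and $t_1\ge 1$.

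Next I would invoke Hölder continuity: there are constants $C>0$ and $\eta\in(0,1]$ with $|\phi(t)-\phi(s)|\le C|t-s|^{\eta}$ for all $t,s\ge 0$. Set
\[
\delta:=\min\Big\{\tfrac12,\ \big(\tfrac{\varepsilon}{2C}\big)^{1/\eta}\Big\}.
\]
Then for every $n$ and every $t\in[t_n,t_n+\delta]$ we have $\phi(t)\ge \phi(t_n)-C|t-t_n|^{\eta}\ge \varepsilon-C\delta^{\eta}\ge \varepsilon/2$. Because $\delta\le 1/2<t_{n+1}-t_n$, the intervals $[t_n,t_n+\delta]$, $n\ge 1$, are pairwise disjoint and contained in $\mathbb{R}_+$. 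Consequently, using $\phi\ge 0$,
\[
\int_0^\infty \phi(t)\,dt \ \ge\ \sum_{n\ge 1}\int_{t_n}^{t_n+\delta}\phi(t)\,dt\ \ge\ \sum_{n\ge 1}\frac{\varepsilon}{2}\,\delta\ =\ +\infty,
\]
which contradicts the hypothesis $\int_0^\infty\phi(t)\,dt<\infty$. Hence $\lim_{t\to\infty}\phi(t)=0$.

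There is no real obstacle here; the only points requiring a little care are (i) choosing $\delta$ small enough that the intervals around the $t_n$ are genuinely disjoint and stay inside $\mathbb{R}_+$ (handled by the $\min$ with $1/2$ and by taking $t_1\ge1$), and (ii) being explicit about what "Hölder continuous" means (exponent in $(0,1]$); if the paper only ever applies this lemma with Lipschitz or $C^1$ regularity one could even state it in that form, but the argument above covers the general Hölder case verbatim.
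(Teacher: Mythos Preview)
Your proof is correct and is the standard argument. Note, however, that the paper does not actually give a proof of this lemma: it simply states ``The following result is well-known'' and moves on. So there is no paper proof to compare against; your proposal supplies a complete, self-contained justification where the paper leaves none.
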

\medskip

 \section{Model with the mass-action incidence mechanism  }
 In this section, we focus on the model with the   mass-action incidence mechanism, that is 
 \begin{equation}\label{model-mass-action}
     \begin{cases}
         \bm S'=d_S\mathcal{L}\bm S-\bm\beta\circ\bm S\circ\bm I+\bm\gamma\circ \bm I, \quad & t>0,\cr 
         \bm I'=d_I\mathcal{L}\bm I+\bm\beta\circ\bm S\circ \bm I-\bm\gamma\circ \bm I, \quad & t>0.
%         N=\|\bm S+\bm I\|_1, \quad & t=0.
     \end{cases}
 \end{equation}
  First, we will discuss the scenario of a complete limitation of the movement of infected population, i.e $d_I=0$, and obtain Theorem \ref{thm-mass-di}. Then, we will present our result on the scenario of a complete limitation of the movement of susceptible population, i.e $d_S=0$, and  establish Theorem \ref{thm-mass-ds}.

 We introduce the low-risk, moderate-risk, and high-risk $\tilde{H}^-$, $\tilde{H}^0$, and $\tilde{H}^+$ patches as follows:
$$
\tilde{H}^-=\{i\in\Omega: 1<r_i/(N\alpha_i)\},\quad  \tilde{H}^0:=\{i\in\Omega  : 1=r_i/(N\alpha_i) \},\quad \text{and}\quad  \tilde{H}^{+}:=\{i\in\Omega : 1>r_i/(N\alpha_i) \}.
$$
The sets $\tilde{H}^{-}$, $\tilde{H}^{0}$, and $\tilde{H}^{+}$ partition $\Omega$ into disjoint subsets, and  either of these sets can be empty. Note that if the system is decoupled, $R_i=N\alpha_i/r_i=N\alpha_i\beta_i/\gamma_i$ is the basic reproduction number of patch $i$.

%\subsubsection{Case of $d_S>0$ and $d_I=0$.}
  Given an initial data $(\bm S^0,\bm I^0)$, depending on the initial distribution of the infected population, we introduce the set
 $$\Omega^0_{\bm I^0}:=\{i\in\Omega: \ I^0_{i}=0\}\quad  \text{and}\quad \Omega^+_{\bm I^0}:=\{i\in\Omega: \  I_{i}^0>0\}$$
 and the quantity
 $$
\tilde r_{m}=\min_{i\in\Omega_{\bm I^0}^+} \frac{r_i}{N\alpha_i}.
 $$
 The patches realizing the above minimum will be called the patches of the highest-risk. 
 
\begin{tm}\label{thm-mass-di} Suppose that {\bf (A1)-(A3)} holds, $d_S>0$ and $d_I=0$. Then every solution $(\bm S(t),\bm I(t))$ of \eqref{model-mass-action} satisfies 
\begin{equation}\label{IH}
\lim_{t\to\infty}\|\bm S(t)-(\sum_{i\in\Omega}S_i(t)){\bm \alpha}\|_{\infty}= 0\quad\text{and}\quad\lim_{t\to\infty}\sum_{j\in \tilde{H}^{-}{\cup \tilde{H}^0}\cup\Omega^0_{\bm I^0}} I_j(t)=0.
\end{equation} 
Furthermore, the following conclusions hold.
\begin{itemize}
    \item[\rm (i)]  If $\tilde{H}^+\cap \Omega^+_{\bm I^0}=\emptyset$, then  $\|\bm S(t)-{N{\bm \alpha}}\|_\infty\to 0$ and $\|\bm I(t)\|_\infty\to 0$ as $t\to\infty$;

\item[\rm (ii)]
If $\tilde{H}^+\cap \Omega^+_{\bm I^0}\ne\emptyset$, then $\|\bm S(t)-N\tilde{r}_m{\bm\alpha}\|_{\infty}\to 0$, $\sum_{j\in\Omega} I_j(t)dx\to N-|\Omega|\tilde{r}_m$, and $ I_j(t)\to 0 $ on  $\{ j\in\Omega :\ r_j/(N\alpha_j)\ne \tilde{r}_m\}$ as $t\to\infty$. 
\end{itemize}
    
\end{tm}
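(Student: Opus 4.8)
The plan is to build a Lyapunov functional on the compact invariant set $\mathcal{E}$ that simultaneously forces the susceptible profile toward a scalar multiple of $\bm\alpha$ and drives the infected mass off the low- and moderate-risk patches (and off $\Omega^0_{\bm I^0}$, which is invariant because $d_I=0$). Since $d_I=0$, each $I_j$ evolves by the scalar ODE $I_j' = (\beta_j S_j - \gamma_j) I_j$, so $I_j\equiv 0$ on $\Omega^0_{\bm I^0}$ for all $t$, and it suffices to track the patches in $\Omega^+_{\bm I^0}$, on which $I_j>0$ for all $t>0$. I would normalize using $\bm\theta = 1/\bm\alpha$ and exploit Theorem \ref{theorem_positive}: the weighted quadratic form $\sum_{i,j}\theta_i L_{ij} X_i X_j$ is negative semidefinite with kernel $\mathbb{R}\bm\alpha$. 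A natural candidate is
\begin{equation*}
V(\bm S,\bm I) = \sum_{i\in\Omega}\theta_i\Big( S_i - \alpha_i c - \alpha_i c\ln\frac{S_i}{\alpha_i c}\Big) + \sum_{i\in\Omega_{\bm I^0}^+}\theta_i I_i,
\end{equation*}
for a suitable reference constant $c$ (ultimately $c=N$ in case (i) and $c=N\tilde r_m$ in case (ii)); one should check $V$ is bounded below on $\mathcal{E}$, using that $S_i$ stays bounded and, via Lemma \ref{Harnck-lemma} applied to the $\bm S$-equation, bounded away from $0$ for $t\ge 1$.

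The key computation is to differentiate $V$ along solutions of \eqref{model-mass-action}. The dispersal term from the $\bm S$-equation contributes $d_S\sum_{i,j}\theta_i(1-\alpha_i c/S_i)L_{ij}S_j$; writing $S_i = \alpha_i c + (S_i-\alpha_i c)$ and using $\mathcal{L}\bm\alpha=\bm 0$, the linear-in-$S$ part vanishes and one is left with a term controlled by $-d_S\sum_{i,j}\theta_i L_{ij} u_i u_j$ where $u_i$ measures the deviation $S_i/\alpha_i$ — this is $\le 0$ by Theorem \ref{theorem_positive}, with equality only when $\bm S\parallel\bm\alpha$. The reaction terms $-\bm\beta\circ\bm S\circ\bm I+\bm\gamma\circ\bm I$ in the $\bm S$-equation and $\bm\beta\circ\bm S\circ\bm I-\bm\gamma\circ\bm I$ in the $\bm I$-equation combine: the $\beta_i S_i I_i$ cross-terms telescope, and after collecting, $\frac{d}{dt}V$ reduces to the nonpositive dispersal quadratic plus a term of the form $\sum_{i\in\Omega_{\bm I^0}^+}\theta_i I_i(\gamma_i - \beta_i \alpha_i c)$ (up to the $S$-dependent correction, which one absorbs), i.e. $c\sum_i \theta_i\beta_i\alpha_i I_i\big(r_i/(N\alpha_i)\cdot N/c - 1\big)$ — nonpositive precisely when $c\le N r_i/(N\alpha_i)$ on the relevant patches, which is why $c=N\tilde r_m$ is the correct borderline choice in case (ii) and $c=N$ works in case (i) where $\tilde H^+\cap\Omega^+_{\bm I^0}=\emptyset$ (so $r_i/(N\alpha_i)\ge 1$ on all active patches). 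Getting the reference constant and the sign bookkeeping exactly right — including handling the coupling between the $\ln S_i$ correction and the reaction terms — is the main obstacle; I expect one must use the conservation law $\sum_i(S_i+I_i)=N$ at a crucial point to pin down $c$ and to identify the limiting total infected mass $N-|\Omega|\tilde r_m$.

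With $\dot V\le 0$ in hand, I would invoke LaSalle's invariance principle on $\mathcal{E}$: the $\omega$-limit set lies in $\{\dot V=0\}$, which by the equality cases forces $\bm S = c\bm\alpha$ on the limit set and $I_i = 0$ on every patch where the reaction coefficient is strictly negative, i.e. on $\tilde H^-\cup\tilde H^0\cup\Omega^0_{\bm I^0}$ in the general statement, and on all of $\{j: r_j/(N\alpha_j)\ne\tilde r_m\}$ in case (ii). The first conclusion in \eqref{IH} then follows since on the limit set $\sum_i S_i(t)\to c$ and $\bm S\to c\bm\alpha$; more carefully, to get $\|\bm S(t)-(\sum_i S_i(t))\bm\alpha\|_\infty\to 0$ without first knowing $c$, I would apply Lemma \ref{lem0} to the $\bm S$-equation with forcing $\bm F = -\bm\beta\circ\bm S\circ\bm I+\bm\gamma\circ\bm I$, which requires showing $\|\bm F(t)\|_1\to 0$; since $\bm F$ is supported (asymptotically) on $\tilde H^+\cap\Omega^+_{\bm I^0}$ where $\bm S\to N\tilde r_m\bm\alpha$ gives $\beta_i S_i\to\gamma_i$, one has $F_i = I_i(\gamma_i-\beta_i S_i)\to 0$, closing the loop. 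Finally, in case (i), $\tilde H^+\cap\Omega^+_{\bm I^0}=\emptyset$ means the infected mass dies on every active patch, forcing $\bm I\to\bm 0$ and hence $\sum_i S_i\to N$, i.e. $\bm S\to N\bm\alpha$; in case (ii), the surviving patches are exactly those realizing $\tilde r_m$, the conservation law gives the total infected limit, and Lemma \ref{lemma_inf0} (or a standard $\omega$-limit argument) upgrades the integral decay of the off-$\tilde r_m$ components to pointwise convergence to $0$.
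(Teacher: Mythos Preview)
Your overall strategy (Lyapunov function + Theorem \ref{theorem_positive} + LaSalle, then Lemma \ref{lem0} for the $\bm S$-profile) matches the paper's, but the specific Lyapunov function you propose does not work. Differentiating your Volterra-type $V$ along solutions, the reaction contributions from the $\bm S$- and $\bm I$-equations telescope to
\[
c\sum_{i\in\Omega}\beta_i\Big(1-\frac{r_i}{S_i}\Big)I_i,
\]
not to $\sum_i\theta_iI_i(\gamma_i-\beta_i\alpha_ic)$ as you claim; this expression is \emph{not} sign-definite (it is positive whenever $S_i>r_i$ on a patch with $I_i>0$), so the ``absorb the $S$-dependent correction'' step fails. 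More seriously, the dispersal contribution
\[
d_S\sum_{i,j\in\Omega}\theta_i\Big(1-\frac{c\alpha_i}{S_i}\Big)L_{ij}S_j
\]
is not the quadratic form handled by Theorem \ref{theorem_positive}; because of the $1/S_i$ factor it cannot be rewritten as $\sum_{i,j}\theta_iL_{ij}X_iX_j$, and in fact it is not nonpositive. For $n=2$ with $\mathcal{L}=\bigl(\begin{smallmatrix}-2&1\\2&-1\end{smallmatrix}\bigr)$ (so $\bm\alpha=(1/3,2/3)$, $\bm\theta=(3,3/2)$) and $c=1$, this dispersal term equals $-3S_1+\tfrac{3}{2}S_2+3-S_2/S_1-2S_1/S_2$, which is strictly positive at $(S_1,S_2)=(0.45,1)$. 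Hence $\dot V\le 0$ already fails with $\bm I=\bm 0$.

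The paper instead uses the purely quadratic functional
\[
V(\bm S,\bm I)=\sum_{i\in\Omega}\theta_i\Big(\tfrac12 S_i^2 + r_iI_i\Big),
\]
for which a one-line computation gives $\dot V = d_S\sum_{i,j}\theta_iL_{ij}S_iS_j - \sum_i\theta_i\beta_i(S_i-r_i)^2I_i$: the first term is exactly the form in Theorem \ref{theorem_positive} and the second is manifestly $\le 0$. LaSalle then yields convergence to $\bigcup_{i}\mathcal{M}_i$ with $\mathcal{M}_i=\{\bm S=(r_i/\alpha_i)\bm\alpha,\ I_j=0\text{ if }r_j/\alpha_j\neq r_i/\alpha_i\}$, but this is a \emph{disjoint union} and your outline does not say how to select the correct component. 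The paper does this by separate ODE arguments: direct comparison gives $I_i\to 0$ on $\tilde H^-\cup\tilde H^0$, a logistic-type estimate gives $\limsup\sum_i I_i\le N(1-\tilde r_m)_+$ (which settles (i) immediately), and in case (ii) a contradiction argument shows that if $\liminf S_i>N\tilde r_m\alpha_i$ for every $i$ then $I_{i_0}$ would grow without bound on a patch realizing $\tilde r_m$. These comparison arguments, not the Lyapunov function, are what identify the limit; your final paragraph's appeal to Lemma \ref{lem0} with $\|\bm F(t)\|_1\to 0$ is also circular, since you justify $\bm F\to 0$ using $\bm S\to N\tilde r_m\bm\alpha$, which is the conclusion you are trying to prove.
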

\begin{proof}
    Define 
    $$
   V(\bm S, \bm I) =\sum_{i\in\Omega} \theta_i\left(\frac{ S_i^2}{2}+r_iI_i\right),
    $$
    where $ \theta_i=1/\alpha_i$ for $i\in\Omega$. It is easy to check that 
    $$
    \dot{V}(\bm S, \bm I)=d_S\sum_{i,j\in\Omega} \theta_iL_{ij} S_i S_j-\sum_{i\in\Omega}\theta_i\beta_i\left( S_i-r_i\right)^2 I_i.
    $$
By Theorem \ref{theorem_positive}, we have $\sum_{i,j\in\Omega} \theta_iL_{ij} S_i S_j\le 0$ for all $\bm S\in\mathbb{R}^k$ and the equality holds if and only if $\bm S$ is a multiple of $\bm \alpha$. 

Denote $\mathcal{M}:=\{(\bm S, \bm I)\in \mathcal{E}:\ \dot{V}(\bm S, \bm I)=0\}$. Then, 
$$
\mathcal{M}=\left\{(\bm S, \bm I)\in \mathcal{E}:\ S_i=r_i \ \text{or} \ I_i=0 \ \ \forall i\in\Omega\ \ \text{and}\ \bm S=a\bm\alpha \text{ for some } a\in\mathbb{R} \right\}.
$$
%Noting $\sum_{i\in\Omega} (S_i(t)+I_i(t))=N$ for all $t\ge 0$ and by the invariance principle, the solution $(\bm S(t), \bm I(t))$ converges to one of the following connected invariant set:
The maximal connected invariant subsets of $\mathcal{M}$ are
$$
\mathcal{M}_i:=\left\{(\bm S, \bm I)\in \mathcal{E}:\  \bm S=\frac{r_i}{\alpha_i}\bm\alpha  \ \ \text{and}\ \  I_j=0\ \ \text{if}\ \frac{r_j}{\alpha_j} \neq \frac{r_i}{\alpha_i}\ \forall j\in\Omega \right\}, \ \ i\in\Omega,
$$
and 
$$
\mathcal{M}_0:=\left\{(\bm S, \bm I)\in \mathcal{E}:\  \bm S=\frac{N}{|\Omega|}\bm\alpha  \ \ \text{and}\ \  \bm I=\bm 0 \right\}.
$$
Hence by the invariance principle \cite[Theorem 4.3.4]{DanHenry}, $(\bm S(t), \bm I(t))\to \mathcal{M}_i$ for some $i=0, 1, ..., n$ as $t\to\infty$. 
%Since the $S-$component of $\mathcal{M}$ consists with multiples of $\bm\alpha$, 
It follows from the equation of $\bm S$ in \eqref{model-mass-action} and Lemma \ref{lem0} that 
\begin{equation}\label{Sinf}
\lim_{t\to\infty}\Big\|\bm S(t)-{\Big(\sum_{j\in\Omega}S_j(t)\Big)}{\bm \alpha}\Big\|_{\infty}= 0.
\end{equation}
If $\tilde{H}^-\neq\emptyset$, by its definition, we can choose  $\varepsilon>0$ such that 
$$
\eta_\varepsilon:=\max_{i\in\tilde{ H}^-}\left\{\beta_i\left((N+\varepsilon)\alpha_i-r_i\right)\right\}<0. 
$$
By \eqref{Sinf} and $\sum_{i\in\Omega}(S_i+I_i)=N$ for all $t\ge 0$, there is $t_\varepsilon>0$ such that 
\begin{equation}\label{salpha}
    \bm S(t)\le \Big(N-\sum_{i\in\Omega}I_i+\varepsilon\Big) \bm\alpha, \ \ \forall \; t\ge t_\varepsilon. 
\end{equation}
It follows that 
$$
I_i'=\beta_i\left(S_i-r_i\right)I_i\le \beta_i\left((N+\epsilon)\alpha_i-r_i\right)I_i\le \eta_\varepsilon I_i, \ \ \forall \ t\ge t_\varepsilon, \ i\in \tilde{H}^-.
$$
Noticing $\eta_\varepsilon <0$, we have 
$$
I_i(t)\le I_i(t_\varepsilon) e^{\eta_\varepsilon t}\to 0\ \ \text{as} \ t\to\infty, \quad\forall\; i\in\tilde H^-. 
$$
Next, if $\tilde{H}^0\ne\emptyset$, by  \eqref{salpha},
$$
I_i'=\beta_i(S_i-r_i)I_i\le \beta_i((N-I_i+\varepsilon)\alpha_i-r_i)I_i=\beta_j\alpha_i(\varepsilon-I_i)I_i,\quad r\ge t_{\varepsilon},\ i\in \tilde{H}^0.
$$
Hence
$$
\limsup_{t\to\infty}I_i(t)\le \varepsilon\quad \forall\ i\in \tilde{H}^0.
$$
Letting $\varepsilon\to 0$ in the last inequality, we obtain that $I_i(t)\to 0$ as $t\to\infty$ for each $i\in \tilde{H}^0.$
Since $I_i(t)=0$ for all $t\ge 0$ for all $i\in\Omega^0_{\bm I^0}$, \eqref{IH} holds.

It remains to prove (i)-(ii). We first show that 
\begin{equation}\label{sup}
   \limsup_{t\to\infty} \sum_{i\in\Omega} I_i(t)\le N(1-\tilde r_m)_+. 
\end{equation}
Since $I_i(t)=0$ for all $t\ge 0$ for all $i\in\Omega^0_{\bm I^0}$, it suffices to prove \eqref{sup} with $\Omega$ replaced by $\Omega^+_{\bm I^0}$. To see this, by \eqref{salpha},
\begin{align*}
 I_i'=\beta_i\left(S_i-r_i\right)I_i&\le \beta_i\Big(\Big(N-\sum_{i\in\Omega^+_{\bm I^0}}I_i+\epsilon\Big)\alpha_i-r_i\Big)I_i\\
 &\le \alpha_i\beta_i\Big(N(1-\tilde{r}_m)_++\varepsilon-\sum_{i\in\Omega^+_{\bm I^0}}I_i \Big)I_i,\ \ \forall \ t\ge t_\varepsilon, \ i\in \Omega^+_{\bm I^0}.   
\end{align*}
Define 
$$
F(t)=\frac{\sum_{i\in\Omega^+_{\bm I^0}}\alpha_i\beta_iI_i(t)}{\sum_{i\in\Omega^+_{\bm I^0}}I_i(t)}, \ \ t\ge 0.
$$
 It then follows that 
$$
\frac{d}{dt} \sum_{i\in\Omega^+_{\bm I^0}} I_i \le F(t)\Big(N(1-\tilde{r}_m)_++\varepsilon-\sum_{i\in\Omega^+_{\bm I^0}}I_i \Big)\sum_{i\in\Omega^+_{\bm I^0}}I_i,\ \ \forall \ t\ge t_\varepsilon
$$
Noticing $0<\min\{\alpha_i\beta_i:\ x\in\Omega^+_{\bm I^0}\}\le F(t)\le \max\{\alpha_i\beta_i:\ x\in\Omega^+_{\bm I^0}\}$, we have 
$$
  \limsup_{t\to\infty} \sum_{i\in\Omega^+_{\bm I^0}} I_i(t)\le N(1-\tilde{r}_m)_++\varepsilon. 
$$
Since $\varepsilon>0$ was arbitrary, \eqref{sup} holds.

(i) If $H^+\cap \Omega^+_{\bm I^0}=\emptyset$, then $(1-\tilde{r}_m)_+=0$. By \eqref{sup}, we have $\lim_{t\to\infty}\|\bm I(t)\|_\infty=0$. Since $\sum_{i\in\Omega}(S_i+I_i)=N$ for all $t\ge 0$, $\sum_{i\in\Omega}S_i(t)\to N$ as $t\to\infty$. Then it follows from \eqref{Sinf} that $\|\bm S(t)-{N{\bm \alpha}}\|_\infty\to 0$ as $t\to\infty$. 

(ii) If $H^+\cap \Omega^+_{\bm I^0}\ne\emptyset$, then $1>\tilde{r}_m$.
By  \eqref{sup} and $\sum_{i\in\Omega} (S_i(t)+I_i(t))=N$ for all $t\ge 0$, 
\begin{equation*}
   \liminf_{t\to\infty} \sum_{i\in\Omega} S_i(t)\ge N{\tilde{r}_m}. 
\end{equation*}
By \eqref{Sinf}, we have 
\begin{equation*}
   \liminf_{t\to\infty}  S_i(t)\ge N{\tilde{r}_m}\alpha_i, \ \ \forall \ i\in\Omega. 
\end{equation*}

Now we claim that there exists $i_0\in\Omega$ such that 
\begin{equation}\label{infSi}
   \liminf_{t\to\infty}  S_{i_0}(t)= N{\tilde{r}_m}\alpha_{i_0}. 
\end{equation}
Assume to the contrary that the claim is false. Then there exist $\varepsilon', t_{\varepsilon'}>0$ such that 
$$
S_i(t)\ge N{\tilde{r}_m}\alpha_i+\varepsilon', \ \ \forall \ t\ge t_{\varepsilon'}, \ i\in\Omega.
$$
Let $i_0\in \Omega^+_{\bm I^0}$ be such that ${\tilde{r}_m}=r_{i_0}/(N\alpha_{i_0})$. Then,
$$
I'_{i_0}=\beta_{i_0}(S_{i_0}-r_{i_0})I_{i_0}\ge \varepsilon'\beta_{i_0}I_{i_0}, \ \ \forall\;t\ge t_{\varepsilon'}.
$$
This implies $I_{i_0}(t)\ge I_{i_0}(t_{\varepsilon'})e^{\varepsilon'\beta_{i_0}(t-t_{\varepsilon'})}$, $t\ge t_{\varepsilon'}$, which contradicts the boundedness of the solution. This proves \eqref{infSi}. 

By \eqref{infSi},  $(\bm S(t), \bm I(t))\to \mathcal{M}_{i_0}$ as $t\to\infty$. This proves (ii). 
\end{proof}
\begin{rk}

By Theorem \ref{thm-mass-di},  if $\tilde{H}^+\cap \Omega^+_{\bm I^0}=\emptyset$, the disease can be eliminated by limiting the movement of infected people when the model has mass-action mechanism. However   if $\tilde{H}^+\cap \Omega^+_{\bm I^0}\neq\emptyset$, 
the infected people will concentrate on the patches of the highest risk, $\{ j\in\Omega :\ r_j/(N\alpha_j)\ne \tilde{r}_m\}$,  when limiting the movement of them. These predictions largely agree with those in \cite{li2023sis,Salako2023dynamics} for the model with mass-action mechanism,  which are based on the asymptotic profiles of the EE as $d_I\to 0$. The main difference is that our results are depending on the distribution of initial data $\bm I^0$. 
\end{rk}

%\subsubsection{Case of $d_S=0$ and $d_I>0$}

Next, we examine the scenario of a total restriction of the movement of the susceptible population. In this direction, our result reads as follows. 

\begin{tm}\label{thm-mass-ds}Suppose that {\bf (A1)-(A3)} holds, $d_S=0$ and $d_I>0$. Then any solution $(\bm S(t), 
\bm I(t))$  of \eqref{model-mass-action} satisfies $\|\bm S(t)-\bm S^*\|_{\infty}\to 0$ as $t\to\infty$ for some $\bm S^*\gg \bm 0$. Furthermore, exactly one of the following statements holds:
\begin{itemize}
    \item[\rm (i)] $\bm S^*=\bm\lambda^*\circ \bm S^0+({\bm 1}-\bm \lambda^*)\circ \bm r$ for some $\bm \lambda^*\in\mathbb{R}^n$   with ${\bm 0}\ll \bm\lambda^*\ll{\bm 1}$, $\sum_{j\in\Omega}S^*_j=N$ and $s(d_I\mathcal{L}+ {\rm diag}(\bm\beta\circ \bm\lambda^*\circ (\bm S^0-\bm r)))\le 0 $, and $\bm I(t)\to\bm 0$ as $t\to\infty$.

    \item[\rm(ii)] $\bm S^*=\bm r$ and  $\|\bm I(t)-\bm I^*\|_\infty\to0$ as $t\to\infty$, where $\bm I^*=(N-\sum_{j\in\Omega} r_j)\bm\alpha\gg \bm 0$.
\end{itemize}
Moreover, {\rm (i)} holds if $N\le \sum_{j\in\Omega}r_{j}$ while {\rm (ii)} holds if $N>N^*_{\bm S^0,\bm r}$, where $N^*_{\bm S^0,\bm r}$ is given by 
\begin{equation}
    N^*_{\bm S^0,\bm r}:=\sup\left\{\bm\lambda^*\cdot \bm S^0+(1-\bm\lambda^*)\cdot \bm r:\  {\bm 0}\le \lambda^*\le {\bf 1}\quad \text{and}\quad s(d_I\mathcal{L}+{\rm diag}(\bm \beta\circ\bm\lambda^*\circ(\bm S_0-\bm r)))\le 0\right\}.
\end{equation}
\end{tm}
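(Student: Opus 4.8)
The key simplification is that with $d_S=0$ each susceptible equation decouples into the scalar linear ODE $S_i'=-\beta_i I_i(S_i-r_i)$, which integrates explicitly: setting $\tau_i(t):=\int_0^t I_i(s)\,ds$ one gets $S_i(t)-r_i=(S_i^0-r_i)e^{-\beta_i\tau_i(t)}$. Since $e^{-\beta_i\tau_i(t)}$ is non-increasing, $\lambda_i^*:=\lim_{t\to\infty}e^{-\beta_i\tau_i(t)}\in[0,1]$ exists and $S_i(t)\to S_i^*:=\lambda_i^*S_i^0+(1-\lambda_i^*)r_i$. Because $d_I>0$ and $\bm I^0>\bm 0$ force $I_i(t)>0$ for $t>0$, we have $\tau_i(t)>0$, hence $\lambda_i^*<1$; together with $r_i>0$ this already gives $\bm S^*\gg\bm 0$. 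To split into the two cases I would invoke the Harnack inequality (Lemma \ref{Harnck-lemma}) applied to $\bm I$ with $\bm M=\bm\beta\circ\bm S-\bm\gamma$ (bounded on the invariant set $\mathcal E$): it yields $\|\bm I(t)\|_\infty\le c\,\bm I_m(t)$ for $t\ge 1$, so $I_i(t)$ and $I_j(t)$ are comparable uniformly in $t\ge 1$; consequently $\int_0^\infty I_i$ is finite for \emph{every} $i$ or infinite for \emph{every} $i$.

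\textbf{Finite case $\Rightarrow$ (i).} Here all $\lambda_i^*\in(0,1)$, so $\bm S^*=\bm\lambda^*\circ\bm S^0+(\bm 1-\bm\lambda^*)\circ\bm r$ with $\bm 0\ll\bm\lambda^*\ll\bm 1$. Since each $I_i$ is Lipschitz on $[0,\infty)$ (the vector field is bounded on $\mathcal E$) and $\int_0^\infty I_i<\infty$, Lemma \ref{lemma_inf0} gives $\bm I(t)\to\bm 0$, and then $\sum_j S_j^*=\lim_t(N-\sum_j I_j(t))=N$. For the spectral bound, note $\lambda_i^*(S_i^0-r_i)=S_i^*-r_i$, so the matrix in (i) equals $A^*:=d_I\mathcal L+\mathrm{diag}(\bm\beta\circ(\bm S^*-\bm r))$, which is quasi-positive and irreducible; let $\bm\phi\gg\bm 0$ be a left Perron eigenvector, $\bm\phi^TA^*=s(A^*)\bm\phi^T$. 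Differentiating, $\frac{d}{dt}(\bm\phi\cdot\bm I)=s(A^*)\,\bm\phi\cdot\bm I+\sum_i\phi_i\beta_i(S_i-S_i^*)I_i$, and because $\bm S(t)\to\bm S^*$ the last sum is, for large $t$, at most $\tfrac12 s(A^*)\,\bm\phi\cdot\bm I$ in absolute value whenever $s(A^*)>0$; this would make $\bm\phi\cdot\bm I$ grow exponentially, contradicting its boundedness. Hence $s(A^*)\le 0$, which is (i).

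\textbf{Infinite case $\Rightarrow$ (ii).} Here all $\lambda_i^*=0$, so $\bm S^*=\bm r$. Then $\bm I'=d_I\mathcal L\bm I+\bm F(t)$ with $\bm F=\bm\beta\circ(\bm S-\bm r)\circ\bm I$ and $\|\bm F(t)\|_1\to0$, so Lemma \ref{lem0} gives $\bm I(t)\to(\lim_t\sum_j I_j(t))\bm\alpha$; since $\sum_j I_j(t)=N-\sum_j S_j(t)\to N-\sum_j r_j$, we get $\bm I(t)\to\bm I^*:=(N-\sum_j r_j)\bm\alpha$, and $\bm I(t)\ge\bm 0$ forces $N\ge\sum_j r_j$. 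It remains to see that we actually have $N>\sum_j r_j$ here (so $\bm I^*\gg\bm 0$). If $N<\sum_j r_j$ this case is impossible: summing $r_i-S_i(t)=(r_i-S_i^0)e^{-\beta_i\tau_i(t)}$ gives $\sum_i(r_i-S_i^0)e^{-\beta_i\tau_i(t)}=(\sum_j r_j-N)+\sum_j I_j(t)\ge\sum_j r_j-N>0$, while $\tau_i(t)\to\infty$ for all $i$ drives the left side to $0$ — a contradiction; thus $N<\sum_j r_j$ lands us in the finite case, i.e.\ ``(i) holds if $N\le\sum_j r_j$'' for $N<\sum_j r_j$. The borderline $N=\sum_j r_j$ is the delicate point: there I would use the exact identity obtained by adding the two equations and integrating, $d_I\mathcal L\,\bm\tau(t)=\bm S(t)+\bm I(t)-\bm S^0-\bm I^0$, which (since $\mathcal L$ restricted to $\ker\bm 1^T$ is invertible and $\bm S+\bm I$ converges) forces $\bm\tau(t)-\sigma(t)\bm\alpha$ to converge, where $\sigma(t):=\sum_j\tau_j(t)\to\infty$; feeding the resulting sharp asymptotics $\tau_i(t)=\alpha_i\sigma(t)+O(1)$ together with $\sigma'(t)=\sum_j I_j(t)$ back into $\sum_j I_j(t)=\sum_j(r_j-S_j^0)e^{-\beta_j\tau_j(t)}$ to pin down the precise decay rate of $\bm I$, and thereby rule out the degenerate possibility $\bm I^*=\bm 0$.

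\textbf{Sufficiency of $N>N^*_{\bm S^0,\bm r}$.} This follows from the dichotomy plus the characterization just obtained: if (i) held then $N=\sum_j S_j^*=\bm\lambda^*\cdot\bm S^0+(\bm 1-\bm\lambda^*)\cdot\bm r$ with $s(d_I\mathcal L+\mathrm{diag}(\bm\beta\circ\bm\lambda^*\circ(\bm S^0-\bm r)))\le 0$, so $N$ is one of the values competing in the supremum defining $N^*_{\bm S^0,\bm r}$, whence $N\le N^*_{\bm S^0,\bm r}$; contrapositively $N>N^*_{\bm S^0,\bm r}$ excludes (i), leaving (ii). I expect the genuine obstacle of the whole proof to be precisely the borderline $N=\sum_j r_j$ — equivalently, showing that (i) and (ii) are \emph{mutually exclusive and exhaustive} there — since the soft arguments (conservation of mass, the Harnack dichotomy, Lemma \ref{lem0}) only give $N\ge\sum_j r_j$ in the infinite-integral case; the rest is a routine assembly of the explicit $\bm S$-formula, Lemma \ref{Harnck-lemma}, Lemma \ref{lem0}, Lemma \ref{lemma_inf0}, and the Perron–Frobenius argument for the spectral bound.
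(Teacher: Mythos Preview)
Your approach is essentially identical to the paper's: explicit integration of $S_i'=-\beta_iI_i(S_i-r_i)$ to get $S_i(t)-r_i=(S_i^0-r_i)e^{-\beta_i\tau_i(t)}$; the Harnack inequality (Lemma~\ref{Harnck-lemma}) to show that the integrals $\int_0^\infty I_i$ are either all finite or all infinite; Lemma~\ref{lemma_inf0} plus a left Perron--Frobenius eigenvector to handle the finite case (your contradiction argument for $s(A^*)\le 0$ is a cosmetic variant of the paper's ``take logs and divide by $t$''); and Lemma~\ref{lem0} for the infinite case. The final implication $N>N^*_{\bm S^0,\bm r}\Rightarrow$~(ii) is also argued exactly as in the paper.

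The one place you go beyond the paper is the borderline $N=\sum_jr_j$, which the paper disposes of with the bare assertion ``it is easy to see that $N\le\sum_ir_i$ implies (i)''. Your instinct that this is the crux is correct, but your proposed resolution --- using the identity $d_I\mathcal L\,\bm\tau(t)=\bm S(t)+\bm I(t)-\bm S^0-\bm I^0$ to extract sharp asymptotics of $\bm\tau$ and thereby \emph{rule out} the infinite-integral case --- cannot succeed, because that case genuinely occurs at the borderline. Take $n=2$, $\mathcal L=\bigl(\begin{smallmatrix}-1&1\\1&-1\end{smallmatrix}\bigr)$, $\bm\beta=\bm\gamma=\bm 1$ (so $\bm r=\bm 1$, $N=2$), and symmetric data $\bm S^0=\bm I^0=(\tfrac12,\tfrac12)$: by symmetry the dispersal terms vanish, and one computes $I_i(t)=1/(2+t)$, hence $\int_0^\infty I_i=\infty$, $\bm S(t)\to\bm r$, $\bm I(t)\to\bm 0$. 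This limit fits neither (i) (it forces $\bm\lambda^*=\bm 0$, since $\bm S^0\ne\bm r$) nor (ii) (it gives $\bm I^*=\bm 0$). So the borderline is not a gap your analysis can close; it is a lacuna in the statement and proof of the theorem itself, and your diagnosis that this is where the difficulty lies was well-founded even though the proposed fix points in the wrong direction.
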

\begin{proof}
 Let  $   J_i(t):=\int_0^t  I_i(\tau)d\tau $ for  $i\in\Omega$ and  $t\ge 0$ and define 
$$
\tilde{ J}_i:=\int_0^\infty  I_i(\tau)d\tau=\lim_{t\to\infty} J_i(t)\in (0, \infty]. 
$$ 
We consider two cases.\\ 
\textbf{Case 1.} $\tilde{ J}_{i_0}<\infty$ for some $i_0\in\Omega$. 
By Lemma \ref{Harnck-lemma}, there exists $C_1>0$ such that 
\begin{equation}\label{maxmi}
    \bm I_M(t)\le C_1  I_i(t), \quad \forall\;t\ge 1, i\in\Omega. 
\end{equation}
Therefore, 
$$
\tilde{ J}_i=\tilde{ J}_i(1)+\int_1^\infty I_i(\tau)d\tau \le \tilde{ J}_i(1)+C_1\int_1^\infty I_{i_0}(\tau)d\tau\le { J}_i(1)+C_1\tilde{ J}_{i_0}, \quad\forall\; i\in\Omega.
$$
Hence, $|\tilde{ J}_i|<\infty$ for all $i\in\Omega$ and $\|\bm {\tilde J}\|_\infty<\infty$. 
Since $|I'_i|$ is uniformly bounded for $t\in (0, \infty)$ and $\tilde J_i<\infty$, it follow from Lemma \ref{lemma_inf0} that $I_i(t)\to 0$ as $t\to\infty$ for each $i\in\Omega$.

By \eqref{maxmi} again, we have
$$
\|\tilde{\bm J}-\bm J(t)\|_\infty\le  C_1\int_t^\infty  I_{i_0}(\tau)d\tau\to 0\ \ \text{as}\ \ t\to\infty. 
$$
By the first equation of \eqref{model-mass-action}, we have 
$$
\frac{d}{dt}( S_i- r_i)=- \beta_i( S_i- r_i) I_i, \quad i\in\Omega, t>0. 
$$
It follows that 
\begin{equation}\label{Sic}
 S_i- r_i=( S_{i}^0-r_i) e^{-\beta_i J_i(t)}, \quad i\in\Omega, t>0. 
\end{equation}
This implies that 
$$
 S_i\to  S_i^*:= r_i+( S_{i}^0- r_i) e^{- \beta_i \tilde{ J}_i}=e^{-\beta_i \tilde{ J}_i} S_{i}^0+(1-e^{-\beta_i \tilde{ J}_i})r_i \quad \text{as}\quad t\to\infty, \quad i\in\Omega. 
$$
Define $ \lambda_i^*=e^{- \beta_i \tilde{ J}_i}$, $i\in\Omega$. Then, $\bm 0\ll \bm \lambda\ll \bm 1$, $\bm S^*=\bm\lambda^*\circ \bm S^0+({\bm 1}-\bm \lambda^*)\circ \bm r$, and $\bm S(t)\to \bm S^*$ as $t\to\infty$. Noticing $\sum_{i\in\Omega}(S_i+I_i)=N$ for any $t\ge 0$ and $I_i(t)\to 0$ as $t\to\infty$, we have $\sum_i S_i^*=N$.  

Let $\bm\psi$ be the positive eigenvector corresponding with $s:=s(d_I\mathcal{L}^T+ {\rm diag}(\bm\beta\circ \bm\lambda^*\circ (\bm S^0-\bm r)))$ with $\bm \psi_M=1$. Note that $s=s(d_I\mathcal{L}+{\rm diag}(\bm\beta\circ\bm\lambda^*\circ(\bm S^0-\bm r)))$. Then, we have 
\begin{eqnarray*}
\frac{d}{dt}\sum_{i\in\Omega} \psi_iI_i&=& d_I\sum_{i, j\in\Omega}L_{ij}\psi_i I_j+\sum_{i\in\Omega}(\beta_i S_i-\gamma_i)\psi_i I_i\\
&=& d_I\sum_{i, j\in\Omega}L_{ji}\psi_j I_i+\sum_{i\in\Omega}(\beta_i S_i-\gamma_i)\psi_i I_i\\
&=&s\sum_{i\in\Omega}\psi_i I_i+\sum_{i\in\Omega}\beta_i( S_i- S_{i}^*)\psi_i I_i\ge\left(s-\bm\beta_M\|\bm S-\bm S^*\|_\infty\right)\sum_{i\in\Omega}\psi_i I_i.
\end{eqnarray*}
Therefore, 
$$
N\ge \sum_{i\in\Omega} \psi_i I_i\ge e^{\int_0^t( s-\bm\beta_M\|\bm S(\tau)-\bm S^*\|_\infty )d\tau} \sum_{i\in\Omega} \psi_i I_{i}^0, \quad t\ge 0. 
$$
Taking log for both sides and dividing by $t$, we obtain
$$
\frac{\ln N-\ln \sum_{i\in\Omega}\psi_i I_{i}^0}{t}+\beta_M\frac{\int_0^t \|\bm S(\tau)-\bm S^*\|_\infty d\tau}{t}\ge s.
$$
Letting $t\to\infty$ and noticing $\bm S(t)\to\bm S^*$ as $t\to\infty$, we have $s\le 0$.  % Finally, observing that $\sigma_*(d_I\mathcal{L}+{\rm diag}(\bm\beta\circ\bm\lambda^*\circ(\bm S_0-\bm r)))=\sigma_*(d_I\mathcal{L}^T+{\rm diag}(\bm\beta\circ\bm\lambda^*\circ(\bm S_0-\bm r)))$, we obtain the desired result.  

\noindent\textbf{Case 2.} $\tilde{J}_{i}=\infty$ for all $i\in\Omega$. Fix $i_1\in\Omega$. Then,  $J_{i_1}(t)\to\infty$ as $t\to\infty$. By \eqref{maxmi} and \eqref{Sic}, 
$$
\|\bm S(t)-\bm r\|_\infty\le \|\bm S_0-\bm r\|_\infty e^{-\frac{\bm\beta_m}{C_1} J_{i_1}(t)}\to 0\quad \text{as}\ t\to\infty. 
$$
Noticing $\sum_{i\in\Omega}(S_i+I_i)=N$ for all $t\ge 0$, $\sum_{i\in\Omega}I_i\to N-\sum_{i\in\Omega}r_i$ as $t\to\infty$. 
By Lemma \ref{lem0} and the second equation of \eqref{model-mass-action}, we have
    $$ 
    \lim_{t\to\infty}\Big\|\bm I(t)-{\Big(N-\sum_{i\in\Omega}r_i
    \Big)}\bm\alpha\Big\|_\infty=0.
    $$
This proves (ii). 

Finally, it is easy to see that $N\le \sum_{i\in\Omega}r_i$ implies (i). If (i) holds, then $N=\sum_{i\in\Omega} S^*_i=\sum_{i\in\Omega}(\lambda^*_iS_{i}^0+(1-\lambda^*_i)r_i)$ for some $\bm\lambda\in\mathbb{R}^n$ with $\bm 0\ll \bm\lambda^*\ll \bm 1$ and $s\le 0$. So, we have $N\le N^*_{\bm S^0,\bm r}$. Therefore, if $N> N^*_{\bm S^0,\bm r}$, then  (ii) holds. 
\end{proof}

\begin{rk} By Theorem \ref{thm-mass-ds}, whether limiting the movement of susceptible people can eliminate the disease is depending on the total population $N$: if  $N\le \sum_{j\in\Omega}r_{j}$, then disease can be controlled; if $N$ is large, then it cannot be controlled by limiting the movement of susceptible people. These predictions agree with those in \cite{li2023sis,Salako2023dynamics} where the predictions are based on the asymptotic profiles of the EE as $d_S\to 0$. 
\end{rk}

\begin{rk}
 Alternatively, we may use a Lyapunov function to prove Theorem \ref{thm-mass-ds}.  Define 
    $$
   V(\bm S, \bm I) =\sum_{i\in\Omega} \left(\frac{ \beta_i S_i^2}{2\gamma_i}+I_i\right).
    $$
   Then it is easy to check that 
    $$
    \dot{V}(\bm S, \bm I)=-\sum_{i\in\Omega}\frac{1}{\gamma_i}\left(\beta_i S_i-\gamma_i\right)^2 I_i\le 0.
    $$
\end{rk}

\section{Model with the standard incidence mechanism}
In the current section, we focus on the model with  standard incidence mechanism, that is 
 \begin{equation}\label{model-standard}
     \begin{cases}
         \bm S'=d_S\mathcal{L}\bm S-(\bm\beta\circ\bm S\circ\bm I)/(\bm S+\bm  I)+\bm\gamma\circ \bm I & t>0,\cr 
         \bm I'=d_I\mathcal{L}\bm I+(\bm\beta\circ\bm S\circ \bm I)/(\bm S+\bm I)-\bm\gamma\circ \bm I & t>0.
     \end{cases}
 \end{equation}
We will study the global dynamics of the model when $d_S=0$ or $d_I=0$ as in the previous section.  
 % In the first subsection, we discuss the scenario of a complete limitation of the infected population i.e $d_I=0$, and the second subsection, we present our results on the scenario of a complete limitation of the susceptible population, i.e $d_S=0$.

 Here, we define the low-risk, moderate-risk, and high-risk patches ${H}^+$, $H^0$ and ${H}^-$  as 
$$
H^-:=\{i\in\Omega: 1<r_i\},\quad  H^0:=\{i\in\Omega  : 1=r_i \},\quad \text{and}\quad  H^{+}:=\{i\in\Omega : 1>r_i \}.
$$
The sets $H^{-}$, $H^{0}$, and $H^{+}$ partition $\Omega$ into disjoint subsets, and  either of these sets can be empty. Note that $1/r_i=\beta_i/\gamma_i$ is the basic reproduction number 
of patch $i$ when the system is decoupled.

%\subsubsection{Case of $d_S>0$ and $d_I=0$.}
% \begin{tm}
% Suppose that {\rm (A1)-(A2)} holds, $d_I=0$ and $d_S>0$. Let $(S, I)$ be the solution of \eqref{model-eq1}. Then the following statements hold:

% \begin{itemize}
%     \item [\rm (i)] There exist positive numbers $0<\underline{S}<\overline{S}$, independent of initial data, such that
% \begin{equation}\label{xx-2}
%    \underline{S}\le\liminf_{t\to\infty} \min_{j\in{\Omega}}S_j(t)\leq \limsup_{t\to\infty}\max_{j\in{\Omega}}S_j(t)\le \overline{S},
% \end{equation} 
% and
% \begin{align}\label{xx-3}
%     (R_j-1)_+\underline{S}\chi_{\{I_0>0\}}\le  \liminf_{t\to\infty}I_j(t)
%     \leq  \limsup_{t\to\infty}I_j(t)\leq (R-1)_+\overline{S}\chi_{\{I_0>0\}}, \quad \forall\ j\in\Omega.  
% \end{align}
% \item[\rm (ii)] If $H^+=\Omega$, then 
% $$
% \lim_{t\to\infty} (S_j( t), I_j(t)=(S^*{\bf 1}, I^*), \ \ \text{uniformly for}\  j\in\Omega,
% $$
%  where 
%  $$
%  I^*=\frac{(\beta_j-\gamma_j)S^*}{\gamma_j}\chi_{\{I_0>0\}}
%  $$
%  and $S^*$ is a positive constant given by 
% \begin{equation}\label{xx-10}
% S^*=\frac{N}{|\Omega|+\sum_{j\in\{I_0>0\}} \frac{(\beta_j-\gamma_j)}{\gamma_j} dx}.
% \end{equation}
% \end{itemize}
% \end{tm}

\begin{tm}\label{thm-standard-di}
Suppose that {\bf (A1)-(A3)} holds, $d_I=0$ and $d_S>0$. Let $(\bm S, \bm I)$ be the solution of \eqref{model-standard}.  Then, 
$$
\lim_{t\to\infty}\bm S(t)=k\bm \alpha,\quad 
\lim_{t\to\infty} I_i(t)=0, \quad \forall\;i\in H_1, \quad \text{and}\quad \lim_{t\to\infty} I_i(t)=\frac{\beta_i-\gamma_i}{\gamma_i}k\alpha_i,\quad \forall\;i\in H_2,
$$
where  $H_1:=H^0\cup H^-\cup\Omega_{\bm I^0}^0$, $H_2:=H^+\cap\Omega^+_{\bm I^0}$  and 
$ k=N/({1+\sum_{i\in H_2} \frac{\beta_i-\gamma_i}{\gamma_i}\alpha_i}).$ 
\end{tm}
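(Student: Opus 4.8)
The plan is to mimic the Lyapunov–LaSalle strategy used for Theorem~\ref{thm-mass-di}, but with a Lyapunov function adapted to the standard incidence mechanism. Since $d_I = 0$, the equation for $I_i$ is the scalar ODE $I_i' = \big(\beta_i S_i/(S_i+I_i) - \gamma_i\big) I_i$, so $I_{i_0}(0)=0$ forces $I_{i_0}(t)\equiv 0$; thus only patches in $\Omega^+_{\bm I^0}$ carry disease. First I would use the $\bm S$-equation together with Lemma~\ref{lem0} (the perturbation $-(\bm\beta\circ\bm S\circ\bm I)/(\bm S+\bm I)+\bm\gamma\circ\bm I$ need not vanish, so one must be a bit careful — one instead works along a convergent subsequence, or uses the eventual-monotone structure of $\sum_j S_j$) to show that any $\omega$-limit point of $\bm S(t)$ is a nonnegative multiple of $\bm\alpha$, i.e. $\|\bm S(t) - (\sum_j S_j(t))\bm\alpha\|_\infty \to 0$. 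The natural candidate Lyapunov function, paralleling the mass-action case, is
$$
V(\bm S,\bm I) = \sum_{i\in\Omega}\theta_i\Big(\frac{S_i^2}{2} + \gamma_i\!\int_{?}^{I_i}\!\cdots\Big),
$$
but because the incidence is $S_iI_i/(S_i+I_i)$ rather than $S_iI_i$, I expect the correct choice to be something like $V=\sum_i\theta_i\big(\tfrac12 S_i^2 + c_i I_i\big)$ with constants $c_i$ chosen (e.g. $c_i = \gamma_i$ or $c_i=r_i=\gamma_i/\beta_i$ times a factor) so that the cross terms telescope; the derivative should come out as $\dot V = d_S\sum_{i,j}\theta_i L_{ij}S_iS_j - (\text{a nonnegative dissipation term})$, with the first term $\le 0$ by Theorem~\ref{theorem_positive}. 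Working out the right $c_i$ and verifying the sign of the dissipation term is the first technical hurdle.

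Next I would identify the largest invariant set $\mathcal M$ in $\{\dot V=0\}$. On this set $\bm S = a\bm\alpha$ for some scalar $a\ge 0$, and the dissipation term vanishing should force, for each $i$, either $I_i=0$ or $S_i$ equals the "equilibrium" value dictated by $\beta_i S_i/(S_i+I_i)=\gamma_i$, i.e. $I_i = \frac{\beta_i-\gamma_i}{\gamma_i}S_i = \frac{\beta_i-\gamma_i}{\gamma_i}a\alpha_i$ (which is $\ge 0$ only when $i\in H^+$, i.e. $r_i<1$). Feeding $\bm S=a\bm\alpha$ into the invariance condition and imposing $\sum_i(S_i+I_i)=N$ pins down $a = N/\big(1+\sum_{i\in H_2}\frac{\beta_i-\gamma_i}{\gamma_i}\alpha_i\big) =: k$ on the "full" component, and $a=N$ on the disease-free component; the LaSalle invariance principle \cite[Theorem 4.3.4]{DanHenry} then gives convergence of $(\bm S(t),\bm I(t))$ to one of these connected invariant subsets. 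The structure here is cleaner than in Theorem~\ref{thm-mass-di} because the standard-incidence patch reproduction number $1/r_i=\beta_i/\gamma_i$ is independent of $a$, so the "high-risk" set $H^+$ does not shift with the total mass — this is why the limit is a single equilibrium rather than a family indexed by $\tilde r_m$.

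Finally I would rule out the disease-free component whenever $H_2=H^+\cap\Omega^+_{\bm I^0}\ne\emptyset$: pick $i_0\in H_2$, so $\beta_{i_0}>\gamma_{i_0}$; once $\bm S(t)$ is close to $N\bm\alpha$ and $\bm I(t)$ is small, $I_{i_0}' = \big(\beta_{i_0}S_{i_0}/(S_{i_0}+I_{i_0})-\gamma_{i_0}\big)I_{i_0} \ge \delta I_{i_0}$ for some $\delta>0$, contradicting $I_{i_0}\to 0$. Hence $(\bm S(t),\bm I(t))$ converges to the component with $\bm S=k\bm\alpha$, $I_i=0$ for $i\in H_1$, and $I_i=\frac{\beta_i-\gamma_i}{\gamma_i}k\alpha_i$ for $i\in H_2$. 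I expect the main obstacle to be pinning down the exact Lyapunov function — getting the constants $c_i$ and confirming that $\dot V$ is a genuine sum of nonnegative terms of the form $\theta_i\beta_i\frac{(\,\cdot\,)^2}{S_i+I_i}I_i$ (or similar), together with the care needed in the LaSalle argument since the state space $\mathcal E$ allows $S_i=I_i=0$, at which the standard incidence term must be interpreted as $0$; one should check the relevant invariant sets avoid that degeneracy (indeed $S_i>0$ eventually since $\bm S\to k\bm\alpha\gg\bm 0$).
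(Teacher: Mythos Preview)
Your overall architecture (Lyapunov function built from $\tfrac12\theta_i S_i^2$ plus $I$-terms, Theorem~\ref{theorem_positive} to control the diffusion piece, then an instability argument to exclude the disease-free set) matches the paper, but two concrete obstacles block the proof as you have written it.

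First, the linear ansatz $V=\sum_i\theta_i(\tfrac12 S_i^2+c_i I_i)$ cannot be made to work. With standard incidence the reaction contribution to $\dot V$ from patch $i$ is $-\theta_i\frac{I_i}{S_i+I_i}(S_i-c_i)\big[(\beta_i-\gamma_i)S_i-\gamma_i I_i\big]$, and for $i\in H_2$ the bracket changes sign along the line $I_i=\frac{\beta_i-\gamma_i}{\gamma_i}S_i$ through the origin, while $S_i-c_i$ changes sign on a vertical line; no constant $c_i$ makes the product sign-definite. The paper instead takes a \emph{quadratic} term $\tfrac12\theta_i\frac{\gamma_i}{\beta_i-\gamma_i}I_i^2$ for $i\in H_2$, which produces the perfect square $-\frac{\theta_i}{\beta_i-\gamma_i}\frac{[(\beta_i-\gamma_i)S_i-\gamma_i I_i]^2 I_i}{S_i+I_i}$.

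Second, and more seriously, even with the correct $V$ one does \emph{not} get $\dot V\le 0$: the $\tfrac12\theta_i S_i^2$ terms for $i\in H^-\cup H^0$ contribute $-\theta_i S_i\frac{I_i}{S_i+I_i}[(\beta_i-\gamma_i)S_i-\gamma_i I_i]\ge 0$ (since $\beta_i\le\gamma_i$ there), and these cannot be absorbed. So LaSalle does not apply directly. The paper circumvents this by (a) proving independently from the scalar equation $I_i'\le -\gamma_i I_i^2/N$ that $I_i\to 0$ on $H^-\cup H^0$ and that the offending terms are integrable on $(0,\infty)$; (b) integrating $\dot V$ to deduce that $\sum_{i,j}\theta_i L_{ij}S_iS_j$ and the $H_2$ square terms are also integrable; (c) invoking Lemma~\ref{lemma_inf0} to conclude these integrands tend to $0$; and (d) using an a~priori uniform lower bound $S_i(t)\ge\underline S>0$ (obtained first, via a Harnack argument on the $\bm S$-equation) both to make sense of the incidence term and to guarantee $\liminf_{t\to\infty}I_i>0$ for $i\in H_2$, which upgrades the vanishing of the square to $(\beta_i-\gamma_i)S_i-\gamma_i I_i\to 0$. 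Your final paragraph invokes $S_i>0$ ``since $\bm S\to k\bm\alpha$'', which is circular; the lower bound must come first.
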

\begin{proof}
First, we claim that there exists $\underline S>0$ such that $\liminf_{t\to\infty} S_i(t)\ge \underline S$ for all $i\in\Omega$. To see this, by the equation of $\bm S$, 
%\begin{equation}
\begin{align*}
\frac{d}{dt}\sum_{i\in\Omega} S_i(t) &= \sum_{i\in\Omega} \gamma_iI_i-\sum_{i\in\Omega}\beta_i\frac{I_i}{I_i+S_i}S_i\medskip 
\displaystyle \ge \bm\gamma_m\sum_{i\in\Omega} I_i-\bm\beta_M \sum_{i\in\Omega} S_i \medskip \\ 
\displaystyle &= \bm\gamma_m\left(N-\sum_{i\in\Omega} S_i\right)-\bm\beta_M \sum_{i\in\Omega} S_i \medskip= \bm\gamma_mN-(\bm\beta_M+\bm\gamma_m) \sum_{i\in\Omega} S_i.
\end{align*}
%\end{equation}
% It follows that 
% $$
%  \liminf_{t\to\infty} \sum_{i\in\Omega} S_i(t)\ge \frac{\bm\gamma_mN}{\bm\beta_M+\bm\gamma_m}.
% $$
So we can choose $t_0>0$ such that 
\begin{equation}\label{t00}
 \sum_{i\in\Omega} S_i(t)\ge \frac{\bm\gamma_mN}{2(\bm\beta_M+\bm\gamma_m)}, \quad\forall\; t\ge t_0. 
\end{equation}
By Lemma \ref{Harnck-lemma}, there exists $c_0>0$ such that  
\begin{equation}\label{t01}
e^{d_S\mathcal{L}}\bm S(t)\ge c_0\|e^{d_S\mathcal{L}}\bm S(t)\|_1 \bm 1, \quad \forall\;t\ge 0. 
\end{equation}
By the equation $\bm S$ again,
$$
\bm S'\ge d_S\mathcal{L}\bm S-\bm\beta_M \bm S. 
$$
So by the comparison principle and \eqref{t00}-\eqref{t01}, we have
\begin{align*}
\bm S(t+1)&\ge e^{-\bm\beta_M } e^{d_S\mathcal{L}}\bm S(t)\ge c_0 e^{-\bm\beta_M } \|e^{d_S\mathcal{L}}\bm S(t)\|_1 \bm 1\\
&=c_0 e^{-\bm\beta_M } \|\bm S(t)\|_1 \bm 1\ge c_0 e^{-\bm\beta_M } \frac{\bm\gamma_mN}{2(\bm\beta_M+\bm\gamma_m)}\bm 1, \ \ \forall\; t\ge t_0. 
\end{align*}
It follows that 
\begin{equation}\label{sinf}
    S_i(t)\ge \underline S:= c_0 e^{-\bm\beta_M } \frac{\bm\gamma_mN}{2(\bm\beta_M+\bm\gamma_m)}, \quad\forall\; i\in\Omega, \ t\ge t_0+1. 
\end{equation}

    Define 
    $$
    V(\bm S, \bm I)=\frac{1}{2}\sum_{i\in\Omega}\theta_iS_i^2+\frac{1}{2}\sum_{i\in H_2}\theta_i
    \frac{\gamma_i}{\beta_i-\gamma_i}I_i^2,
    $$
    where $\bm\theta=1/\bm \alpha$.
Then, we have
\begin{equation}\label{Vp}
\dot V(\bm S, \bm I)=\sum_{i, j\in\Omega} \theta_i L_{ij}S_iS_j-\sum_{i\in H_1} \theta_i\frac{(\beta_i-\gamma_i)S_i-\gamma_i I_i}{S_i+I_i}S_iI_i-\sum_{i\in H_2}\frac{\theta_i}{\beta_i-\gamma_i}\frac{[(\beta_i-\gamma_i)S_i-\gamma_i I_i]^2I_i}{S_i+I_i}.
\end{equation}
By Theorem \ref{theorem_positive}, $\sum_{i, j\in\Omega} \theta_i L_{ij}S_iS_j\le 0$, where the equality holds if and only if $\bm S$ is a multiple of $\bm\alpha$.

It is easy to see that if $I_i(0)=0$ for some $i\in\Omega$ then $I_i(t)=0$ for all $t>0$. If $i\in H^-\cup H^0$, then $\beta_i\le\gamma_i$ and, by the equation of $\bm I$,
$$
I'_i(t)=\frac{((\beta_i-\gamma_i)S_i-\gamma_i I_i)I_i}{S_i+I_i}\le \frac{-\gamma_i I_i^2}{S_i+I_i}\le -\frac{\gamma_i}{N}I_i^2.
$$
It follows that $I_i(t)\to 0$ as $t\to\infty$ for all $i\in H_1$. Moreover, integrating the equation of $I_i$ on $(0, \infty)$, we find that 
$$
-I^0_i=\int_0^\infty I_i'(t) dt=\int_0^\infty \frac{(\beta_i-\gamma_i)S_i-\gamma_i I_i}{S_i+I_i}I_i dt, \quad \forall\;i\in H^-\cup H^0. 
$$
Noticing $(\beta_i-\gamma_i)S_i-\gamma_i I_i\le 0$ for $i\in H^-\cup H^0$, we have 
$$
0\le -\int_0^\infty \frac{(\beta_i-\gamma_i)S_i-\gamma_i I_i}{S_i+I_i}S_iI_i dt<\infty. 
$$
So integrating \eqref{Vp} over $(0, \infty)$, we find that 
\begin{equation}\label{lb}
0\le -\int_0^\infty \sum_{i, j\in\Omega} \theta_i L_{ij}S_iS_jdt<\infty\quad
\text{and}\quad  
\int_0^\infty \frac{[(\beta_i-\gamma_i)S_i-\gamma_i I_i]^2I_i}{S_i+I_i} dt<\infty, \quad\forall\; i\in H_2. 
\end{equation}
Since $\underline{S}/2\le S_i(t)+I_i(t)<N$ for all $t\gg 1$, we have  $\sup_{t\ge 0}|S_i'(t)|<\infty$ and $\sup_{t\ge 0}|I_i'(t)|<\infty$. We then deduce from \eqref{lb}  and Lemma \ref{lemma_inf0} that 
\begin{equation}\label{SS}
    \lim_{t\to\infty}\sum_{i, j\in\Omega} \theta_i L_{ij}S_iS_j=0 
\end{equation}
and 
\begin{equation}\label{II}
    \lim_{t\to\infty}\frac{[(\beta_i-\gamma_i)S_i-\gamma_i I_i]^2I_i}{S_i+I_i}=0, \quad\forall\; i\in H_2.
\end{equation}
Observe that 
$$
I_i'(t)=((\beta_i-\gamma_i)S_i-\gamma_iI_i)\frac{I_i}{S_i+I_i}\ge \Big(\frac{\underline{S}}{2}(\beta_i-\gamma_i)-\gamma_iI_i\Big)\frac{I_i}{S_i+I_i},\quad t\gg 1, \ i\in H_2.
$$
Therefore, $\liminf_{t\to\infty}I_i(t)\ge \frac{\underline{S}(\beta_i-\gamma_i)}{2\gamma_i}>0$ for $i\in H_2$. As a result, we obtain from \eqref{II} that 
\begin{equation}\label{II-2}
    \lim_{t\to\infty}[(\beta_i-\gamma_i)S_i-\gamma_i I_i]^2=0, \quad\forall\; i\in H_2.
\end{equation}
Since $\bm 0<\underline{S}\bm 1\le \liminf_{t\to\infty}\bm S(t)\le\limsup_{t\to\infty}\bm S(t)\le N\bm 1$, possibly after passing to a subsequence, we may suppose that $\bm S(t)\to \bm S^*$  as $t\to\infty$. Thanks to \eqref{SS},  we have $\sum_{i,j\Omega}\theta_iL_{ij}S_i^*S_j^*=0$. In view of Theorem \ref{theorem_positive},   $\bm S^*=k^*\bm \alpha$ for some positive number $k^*$.  By \eqref{II-2}, we have that $I_i(t)\to
\frac{k^*(\beta_i-\gamma_i)}{\gamma_i}$ as $t\to \infty$, for each $i\in H_2$.  Recalling that $\|\bm S(t)+\bm I(t)\|_1=N$ for all $t\ge 0$, then $N=k^*(1+\sum_{j\in H_2}(\beta_j-\gamma_j)\alpha_j/\gamma_j)$, which yields that $k^*=N/((1+\sum_{j\in H_2}(\beta_j-\gamma_j)\alpha_j/\gamma_j))$. Since $k^*$ is independent of the chosen subsequence, the desired result follows.

\end{proof}

\begin{rk} By Theorem \ref{thm-standard-di}, the  infected people will live exactly on $H^+\cap \Omega^+_{\bm I^0}$ (i.e. the high-risk patches with  positive initial infected cases) for the model with standard incidence mechanism when the dispersal of infected people is limited.
 As before, these predictions  agree with previous studies for the model with standard incidence mechanism in \cite{li2019dynamics,chen2020asymptotic}, which are based on the asymptotic profiles of the EE as $d_I\to 0$. The main difference is that our results are depending on the distribution of initial data $\bm I^0$.
\end{rk}

%\subsubsection{Case of $d_S=0$ and $d_I>0$}

Next, we examine the scenario of a total restriction of the movement of the susceptible population. In this direction, our result reads as follows.

% \subsubsection{Case of $d_S=0$ and $d_I>0$.}

% The following result has appeared in the literature. 

% \medskip

% \begin{tm} Suppose that {\bf (A1)-(A2)} holds, $d_S=0$ and $d_I>0$. Let $(S, I)$ be the solution of \eqref{model-eq1}. Then the following conclusions hold:
% \begin{enumerate}
%     \item[\rm (i)] If $H^-$ is nonempty, 
%     \item[\rm (ii)] If $\Omega=H^+$
% \end{enumerate}
% \end{tm}

\begin{tm}\label{thm-standard-ds} Suppose that {\bf (A1)-(A3)} holds, $d_S=0$ and $d_I>0$. Let $(\bm S(t),\bm I(t))$ be the solution of \eqref{model-eq1}. Then the following conclusions hold:
\begin{enumerate}
\item[\rm (i)] If $H^-\cup H^0$ is nonempty, then $\bm I(t)\to\bm 0$ as $t\to\infty$ and $\liminf_{t\to\infty} S_j(t)>0$  for $j\in H^-\cup H^0$.

%Furthermore, up to a subsequence, $\|\bm S (t)-\bm S^*\|_\infty \to 0$  as  $t \to\infty$, where $\bm S^*\in \mathbb{R}^n_+$ satisfies $S_j^*>0$  for $j\in H^-\cup H^0$.

\item[\rm (ii)]  If $H^+=\Omega$,  then  $(\bm S(t),\bm I(t))\to ( I^*(\bm r\circ\bm\alpha/(\bm 1-\bm r)),I^*\bm \alpha)$  as $t\to\infty$, where $I^*=N/(1+\|\bm 1/(\bm 1-\bm r)\|_1)$.
\end{enumerate}
    
\end{tm}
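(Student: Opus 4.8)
The plan is to prove the two parts separately: part (i) exploiting that the $S$-equation of \eqref{model-standard} has no diffusion, and part (ii) using a new quadratic Lyapunov function built from $\bm\alpha$ together with the invariance principle.

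\emph{Part (i).} Fix $j\in H^-\cup H^0$, so $\beta_j\le\gamma_j$. Since $d_S=0$, the $S$-equation reads
\[
S_j'=\frac{I_j}{S_j+I_j}\big((\gamma_j-\beta_j)S_j+\gamma_jI_j\big)\ \ge\ \frac{\gamma_j}{N}\,I_j^2\ \ge\ 0,
\]
using $S_j+I_j\le N$. Hence $S_j$ is nondecreasing and bounded, so $S_j(t)\nearrow S_j^\infty$; moreover $S_j'|_{S_j=0}=\gamma_jI_j>0$ for $t>0$ (recall $I_j(t)>0$ for $t>0$ since $d_I>0$, $\bm I^0>\bm0$), so $S_j$ becomes and stays positive and $\liminf_{t\to\infty}S_j(t)>0$, which is the second assertion. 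As $\int_0^\infty S_j'\,dt=S_j^\infty-S_j(0)<\infty$ and $S_j'$ is nonnegative and, on $[1,\infty)$, H\"older continuous (there $S_j+I_j\ge S_j$ is bounded away from $0$), Lemma \ref{lemma_inf0} gives $S_j'(t)\to0$, and the displayed bound then forces $I_j(t)\to0$. Finally I would write the $I$-equation as $\bm I'=d_I\mathcal L\bm I+\bm M(t)\circ\bm I$ with $M_i=\beta_iS_i/(S_i+I_i)-\gamma_i\in[-\gamma_i,\beta_i-\gamma_i]$ bounded, and invoke Lemma \ref{Harnck-lemma}: $\|\bm I(t)\|_\infty\le c\,\bm I_m(t)\le c\,I_j(t)\to0$, so $\bm I(t)\to\bm0$.

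\emph{Part (ii).} Here $r_i<1$ for every $i$, so with $\bm\theta=1/\bm\alpha$ I would set
\[
V(\bm S,\bm I)=\frac12\sum_{i\in\Omega}\theta_i\Big(I_i^2+\frac{1-r_i}{r_i}\,S_i^2\Big),
\]
and, using $(\beta_i-\gamma_i)S_i-\gamma_iI_i=\beta_i\big((1-r_i)S_i-r_iI_i\big)$ and the absence of diffusion in the $S$-equation, verify that
\[
\dot V=d_I\sum_{i,j\in\Omega}\theta_iL_{ij}I_iI_j\ -\ \sum_{i\in\Omega}\frac{\theta_i\beta_i}{r_i}\,\frac{I_i\big[(1-r_i)S_i-r_iI_i\big]^2}{S_i+I_i}\ \le\ 0,
\]
the first term being $\le0$ by Theorem \ref{theorem_positive} (equality iff $\bm I$ is a multiple of $\bm\alpha$) and the second visibly $\le0$ (equality iff for each $i$ either $I_i=0$ or $(1-r_i)S_i=r_iI_i$). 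By the invariance principle \cite[Theorem 4.3.4]{DanHenry}, the $\omega$-limit set $\omega$ of the solution lies in the largest invariant subset $\mathcal M^*$ of $\mathcal M=\{(\bm S,\bm I)\in\mathcal E:\dot V=0\}$. On $\mathcal M$ one has $\bm I=c\bm\alpha$ with $c\ge0$; if $c>0$ then $(1-r_i)S_i=r_ic\alpha_i$ for all $i$ and the constraint $\sum_i(S_i+I_i)=N$ pins down $c$, so the only such point is the positive equilibrium displayed in the statement, while if $c=0$ one recovers the (invariant) disease-free manifold $\mathcal D:=\{(\bm S,\bm0)\in\mathcal E\}$. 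Hence $\mathcal M^*=\mathcal D\cup\{(\bm S^*,\bm I^*)\}$, a disjoint union of a connected set and an isolated point, and since $\omega$ is connected it suffices to rule out $\omega\subseteq\mathcal D$, i.e.\ $\bm I(t)\to\bm0$; then $\omega=\{(\bm S^*,\bm I^*)\}$ and we are done.

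The hard part will be excluding $\bm I(t)\to\bm0$. Since $\bm\beta-\bm\gamma\gg\bm0$ and $s(d_I\mathcal L)=0$, adding a positive diagonal gives $s(d_I\mathcal L+\mathrm{diag}(\bm\beta-\bm\gamma))>0$, so the disease-free states ought to be repelling and a uniform persistence argument ought to give $\delta>0$ with $\liminf_{t\to\infty}\min_iI_i(t)\ge\delta$. The subtlety is that $\mathcal D$ is a continuum of equilibria meeting the degenerate boundary where some $S_i=0$, so the linearized growth rate of $\bm I$ is not obviously positive along a trajectory approaching $\mathcal D$. I would argue directly: if $\bm I(t)\to\bm0$ then, along a subsequence, $(\bm S(t_n),\bm I(t_n))\to(\bm S^\sharp,\bm0)$ with $\sum_iS_i^\sharp=N$; using the Harnack bound of Lemma \ref{Harnck-lemma} and a differential inequality for the ratios $S_i/I_i$ to control $v_i:=I_i/(S_i+I_i)$, together with $\tfrac{d}{dt}\sum_i\psi_iI_i=\big(s(d_I\mathcal L+\mathrm{diag}(\bm\beta-\bm\gamma))-o(1)\big)\sum_i\psi_iI_i$ for a positive eigenvector $\bm\psi$ (valid along stretches where $\bm I$ is small and $\bm S$ is near $\bm S^\sharp$, where $\int I_i$ over such a stretch stays bounded so $\bm S$ cannot drift far), one forces $\bm I$ to grow back, a contradiction. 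The genuinely delicate case is $\bm S^\sharp$ with vanishing components; everything else — the $\dot V$ identity, the appeals to Theorem \ref{theorem_positive} and Lemmas \ref{Harnck-lemma} and \ref{lemma_inf0}, and the LaSalle/connectedness bookkeeping — is routine.
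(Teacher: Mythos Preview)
Your Part~(i) is correct and in fact more economical than the paper's argument. You exploit directly that $S_j$ is nondecreasing on $H^-\cup H^0$ to get $I_j\to0$ there via Lemma~\ref{lemma_inf0}, and then push this to $\bm I\to\bm0$ with the Harnack bound of Lemma~\ref{Harnck-lemma}. The paper instead runs the same quadratic Lyapunov function you introduce in Part~(ii) (note that it is not positive definite when $H^-\ne\emptyset$, only bounded on $\mathcal E$) through the invariance principle to reach $\bm I\to\bm0$, and only afterwards integrates $S_j'\ge\tfrac{\bm\gamma_m}{N}I_j^2$ to get $\liminf_{t\to\infty}S_j>0$. Your route avoids the Lyapunov function for~(i) altogether.

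In Part~(ii) your Lyapunov function, the $\dot V$ identity, and the LaSalle bookkeeping coincide with the paper's. The genuine gap is the exclusion of $\bm I(t)\to\bm0$. The degeneracy you flag is real: if some $S_i^\sharp=0$ along the approach to $\mathcal D$, then $S_i/(S_i+I_i)$ need not tend to $1$, the effective coefficient in your $\sum_i\psi_iI_i$ inequality need not be close to $\beta_i-\gamma_i$, and the sketched control of $v_i=I_i/(S_i+I_i)$ via ratios $S_i/I_i$ is not obviously available because $\bm I$ carries diffusion while $\bm S$ does not. The paper sidesteps all of this with a short comparison argument you are missing: freeze $F_i(t):=I_i/(S_i+I_i)$ and regard the system as non-autonomous linear in $(\bm S,\bm I)$; since $H^+=\Omega$ means $\beta_i>\gamma_i$ for every $i$, this system is cooperative. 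For any $l>0$ the pair
\[
(\underline{\bm S},\underline{\bm I})=\big(l\,(\bm\gamma/(\bm\beta-\bm\gamma))\circ\bm\alpha,\ l\bm\alpha\big)
\]
is an exact steady state of it (use $\mathcal L\bm\alpha=\bm0$ and $(\beta_i-\gamma_i)\underline S_i=\gamma_i\underline I_i$). Choosing $l=\min\big\{(\bm S(1)\circ(\bm\beta-\bm\gamma)/(\bm\gamma\circ\bm\alpha))_m,\ (\bm I(1)/\bm\alpha)_m\big\}>0$ gives $(\underline{\bm S},\underline{\bm I})\le(\bm S(1),\bm I(1))$, and the comparison principle for cooperative systems yields $\bm I(t)\ge l\bm\alpha\gg\bm0$ for all $t\ge1$. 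This rules out $\omega\subseteq\mathcal D$ in one stroke and finishes the proof without any persistence machinery.
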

\begin{proof}
    Define 
    $$
    V(\bm S,\bm I)=\frac{1}{2}\sum_{i\in\Omega}\theta_i\frac{(1-r_i)}{r_i}S_i^2+\frac{1}{2}\sum_{i\in\Omega}\theta_i I_i^2,
    $$
    where $\bm \theta=\bm 1/\bm\alpha$.
    Then $-\frac{N^2}{2}\sum_{i\in\Omega}\theta_i\le V(\bm S,\bm I)\le N^2(1+\|\bm 1/\bm r\|_{\infty})\sum_{i\in\Omega}\theta_i $ and
    \begin{equation*}
        \dot{V}(\bm S,\bm I)=\sum_{i,j\in\Omega}\theta_iL_{ij}I_iI_j-\sum_{i\in\Omega}\frac{\theta_i\beta_i^2}{\gamma_i}\big((1-r_i)S_i-r_i I_i\big)^2\frac{I_i}{S_i+I_i}.
    \end{equation*}
    By Theorem \ref{theorem_positive}, $\sum_{i, j\in\Omega} \theta_i L_{ij}I_iI_j\le 0$, where the equality holds if and only if $\bm I$ is a multiple of $\bm\alpha$.   Denote $\mathcal{N}:=\{(\bm S, \bm I)\in \mathcal{E}:\ \dot{V}(\bm S, \bm I)=0\}$. Then, 
$$
\mathcal{N}=\left\{(\bm S, \bm I)\in \mathcal{E}:\ \big((1-r_i)S_i-r_i I_i\big)^2I_i=0 \ \ \forall i\in\Omega\ \ \text{and}\ \bm I=a\bm\alpha \text{ for some } a\in\mathbb{R}_+ \right\}.
$$
From this point, we distinguish two cases.

{\rm (i)} Suppose that $H^-\cup H^{0}\ne\emptyset$. In this case, the maximal connected invariant subset of $\mathcal{N}$ is 
$$
\mathcal{N}_1:=\left\{(\bm S, \bm I)\in \mathcal{E}:\  \bm I=\bm 0 \right\}.
$$
Hence by the invariance principle \cite[Theorem 4.3.4]{DanHenry}, $(\bm S(t), \bm I(t))\to \mathcal{N}_1$ as $t\to\infty$. Thus $\bm I(t)\to \bm 0$ as $t\to\infty$. 

Next, fix $i\in H^{-}\cup H^{0}$. Then
$$ 
S_i'=(\gamma_iI_i-(\beta_i-\gamma_i)S_i)\frac{I_i}{S_i+I_i}\ge\frac{\gamma_i I_i^2}{S_i+I_i} \ge \frac{\bm \gamma_m }{N}I_i^2.
$$
Integrating both sides and taking $\liminf$ as $t\to\infty$, we obtain
$$
\liminf_{t\to\infty}S_i(t)\ge S_{i}^0+\frac{\bm \gamma_m}{N}\int_{0}^{\infty}I_i^2dt>0.
$$
%This implies that $I_i(t)\to 0$ as $t\to \infty$ since $\sup_{t\ge 0}|I_i'(t)|<\infty$. Moreover, since $\sup_{t\ge 0}\|\|_{\infty}<\infty$, it follows from Lemma \ref{} that there is $c_0>0$ such that \begin{equation*}    \|\bm I(t)\|_1\le c_0I_{m}(t) \ \forall\ t\ge 1.\end{equation*}Therefore, $\|\bm I(t)\|_1\le c_0 I_i(t)\to 0$ as $t\to\infty$. 

{\rm (ii)} Suppose that $H^-\cup H^{0}=\emptyset$, that is $H^+=\Omega$. Then, in addition to the set $\mathcal{N}_1$, the set $\mathcal{N}_2:=\{( I^*(\bm r\circ\bm\alpha/(\bm 1-\bm r)),I^*\bm \alpha)\}$
is also a maximal connected invariant subset of $ \mathcal{N}$, where $I^*=N/(1+\|\bm 1/(\bm 1-\bm r)\|_1)$. Hence by the invariance principle \cite[Theorem 4.3.4]{DanHenry}, $(\bm S(t), \bm I(t))\to \mathcal{N}_i$ for some $i=1,2$ as $t\to\infty$. Observe that $\mathcal{N}_2$ contains a single point.  To complete the proof of the result, it remains to show that $\bm I(t) \nrightarrow \bm 0$ as $t\to\infty$. To this end, setting $F_i(t)=I_i/(S_i+I_i)$ for $t>0$ and $i\in\Omega$, then $(\bm S(t), \bm I(t))$ solves the cooperative system
\begin{equation}\label{coop-sys}
    \begin{cases}
        \bm S'=(\bm\gamma\circ \bm I-(\bm \beta-\bm \gamma)\circ\bm S)\circ \bm F, & t>0,\cr 
        \bm I'=d_I\mathcal{L}\bm I +((\bm \beta-\bm \gamma)\circ\bm S-\bm\gamma\circ \bm I)\circ \bm F, & t>0.
    \end{cases}
\end{equation}
Taking 
$$
(\underline{\bm S},\underline{\bm I})=(l(\bm\gamma/(\bm\beta-\bm\gamma))\circ\bm\alpha,l\bm \alpha)
$$
where $l:=\min\{(\bm S(1)\circ(\bm\beta-\bm\gamma)/\bm \gamma\circ \bm\alpha)_m, (\bm I(1)/\bm \alpha)_m\}>0$, we have that $(\underline{\bm S},\underline{\bm I}) $ solves \eqref{coop-sys} and $(\underline{\bm S},\underline{\bm I})\le (\bm S(1),\bm I(1))$.  Therefore, by the comparison principle for cooperative systems, we have that $ (\underline{\bm S},\underline{\bm I})\le (\bm S(t),\bm I(t))$ for all $t\ge 1$. This shows that the omega limit set of $(\bm S(t),\bm I(t))$ doesn't intersect with $\mathcal{N}_1$. Therefore, $(\bm S(t),\bm I(t))\to \mathcal{N}_2$ as $t\to\infty$.

\end{proof}
\begin{rk}By Theorem \ref{thm-standard-ds}, if $H^-\cup H^0$ is nonempty, the disease can be controlled by limiting the movement of susceptible people.  This  agrees with previous studies for the model with standard incidence mechanism in \cite{allen2007asymptotic,chen2020asymptotic}, which are based on the asymptotic profiles of the EE as $d_I\to 0$. If $H^-\cup H^0$ is empty, then the disease cannot be controlled by limiting the movement of susceptible people. 
    
\end{rk}

\bibliographystyle{plain}%{astron}%{unsrt}% {astron}% {amsplain}
 \bibliography{epidem}

\end{document}